\title{\LARGE \bf
Online Adversarial Stabilization of \\Unknown Linear Time-Varying Systems
}
\author{Jing Yu, Varun Gupta, and Adam Wierman
%
\thanks{This work was supported by Caltech/Amazon AWS AI4Science
fellowships and the National Science Foundation under grants CNS-2146814,
CPS-2136197, CNS-2106403, ECCS-2200692, and NGSDI-2105648. 
J. Yu and A. Wierman are with the Department of Computing and Mathematical Sciences, California Institute of Technology {\tt\small \{jing, adamw\}@caltech.edu}. V. Gupta is with the Booth School of Business, University of Chicago {\tt\small guptav@uchicago.edu}}
}
\def\dashint{\,\ThisStyle{\ensurestackMath{%
  \stackinset{c}{.2\LMpt}{c}{.5\LMpt}{\SavedStyle-}{\SavedStyle\phantom{\int}}}%
  \setbox0=\hbox{$\SavedStyle\int\,$}\kern-\wd0}\int}
\newcommand{\st}{\textsf{st}}
\newcommand{\RR}{\mathbb{R}}
\newcommand\abs[1]{\left|#1\right|}
\newcommand\norm[1]{\left\Vert#1\right\Vert}
\newcommand\infnorm[1]{\left\Vert#1\right\Vert_\infty}
\newtheorem{theorem}{Theorem}
\newtheorem{corollary}{Corollary}
\newtheorem{definition}{Definition}
\newtheorem{assumption}{Assumption}
\newtheorem{lemma}{Lemma}
\newtheorem{remark}{Remark}
\newcommand{\inner}[2]{\left\langle #1,#2 \right\rangle}
\newcommand{\dlyap}{\textsf{dlyap}}
\let\hat\widehat
\newcommand{\OPT}[1]{\textsc{OPT}_{[0,#1]}}
\newcommand{\INT}[1]{\textsc{INT}_{[0,#1]}}
\newcommand{\sysm}{\{\theta_t\}_{t=1}^{\infty}} 
\newcommand{\bigO}{\mathcal{O}}
\def\dashint{\,\ThisStyle{\ensurestackMath{%
            \stackinset{c}{.2\LMpt}{c}{.5\LMpt}{\SavedStyle-}{\SavedStyle\phantom{\int}}}%
        \setbox0=\hbox{$\SavedStyle\int\,$}\kern-\wd0}\int}
\newcommand{\dINT}[2]{\Delta^{\textsc{INT}}_{[#1,#2]} }
\newcommand{\dOPT}[2]{\Delta^{\textsc{OPT}}_{[#1,#2]} }
\newcommand{\changeoperator}[1]{%
  \csletcs{#1@saved}{#1@}%
  \csdef{#1@}{\changed@operator{#1}}%
}
\newcommand{\changed@operator}[1]{%
  \mathop{%
    \mathchoice{\textstyle\csuse{#1@saved}}
               {\csuse{#1@saved}}
               {\csuse{#1@saved}}
               {\csuse{#1@saved}}%
  }%
}
\begin{document}

\maketitle
\thispagestyle{empty}
\pagestyle{empty}

\begin{abstract}
This paper studies the problem of online stabilization of an unknown discrete-time linear time-varying (LTV) system under bounded non-stochastic (potentially adversarial) disturbances. 
We propose a novel algorithm based on convex body chasing (CBC). Under the assumption of infrequently changing or slowly drifting dynamics, the algorithm guarantees bounded-input-bounded-output stability in the closed loop. Our approach avoids system identification and applies, with minimal disturbance assumptions, to a variety of LTV systems of practical importance.  We demonstrate the algorithm numerically on examples of LTV systems including Markov linear jump systems with finitely many jumps.
\end{abstract}

\section{Introduction}
Learning-based control of linear-time invariant (LTI) systems in the context of linear quadratic regulators (LQR) has seen considerable progress. However, many real-world systems are time-varying in nature. For example, the grid topology in power systems can change over time due to manual operations or unpredictable line failures \cite{ deka2017structure}. Therefore, there is increasing recent interest in extending learning-based control of LTI systems to the linear time-varying (LTV) setting \cite{gradu2020adaptive,nortmann2020data,luo2022dynamic,lin2022online, minasyan2021online}. 

LTV systems are widely used to approximate and model real-world dynamical systems such as robotics \cite{tedrake2009underactuated} and autonomous vehicles \cite{falcone2008linear}. In this paper, we consider LTV systems with dynamics of the following form: 
\begin{equation}
\label{eq:ltv}
    x_{t+1} = A_t x_t + B_t u_t + w_t,  
\end{equation}
where $x_t \in \RR^n$, $u_t \in \RR^m$ and $w_t$ denotes the state, the control input, and the bounded and potentially adversarial disturbance, respectively.
We use $\theta_t = [A_t\ B_t]$ to succinctly denote
the system matrices at time step $t$. 

On the one hand, \textbf{offline control} design for LTV systems is well-established in the setting where the underlying LTV model is \textit{known} \cite{tsakalis1993linear, toth2010modeling,mohammadpour2012control, zhang2019sampled, mojgani2020stabilization}. Additionally, recent work has started focusing on regret analysis and non-stochastic disturbances for known LTV systems \cite{gradu2020adaptive, goel2023regret}.

On the other hand, \textbf{online control} design for LTV systems where the model is \textit{unknown} is more challenging. 
Historically, there is a rich body of work on adaptive control design for LTV systems \cite{tsakalis1987adaptive, slotine1986adaptive, marino2003adaptive}. Also related is the system identification literature for LTV systems \cite{verhaegen1995class, bamieh2002identification,sarkar2019nonparametric}, which estimates the (generally assumed to be stable) system to allow the application of the offline techniques.

In recent years, the potential to leverage modern data-driven techniques for controller design of unknown linear systems has led to a resurgence of work in both the LTI and LTV settings. There is a growing literature on ``learning to control'' unknown LTI systems under stochastic or no noise \cite{dean2019safely,talebi2021regularizability, lale2022reinforcement}. 
Learning under bounded and potentially adversarial noises poses additional challenges, but online stabilization \cite{yu2023online} and regret \cite{chen2021black} results have been obtained.  

In comparison, there is much less work on learning-based control design for unknown LTV systems. One typical approach, exemplified by \cite{nortmann2020data, rotulo2022online, baros2022online}, derives stabilizing controllers under the assumption that \textit{offline} data representing the input-output behavior of \eqref{eq:ltv} is available and therefore an \textit{offline} stabilizing controller can be pre-computed. Similar \textit{finite-horizon} settings where the algorithm has access to offline data \cite{pang2018data}, or can \textit{iteratively} collect data \cite{liu2016batch} were also considered. In the context of \textit{online} stabilization, i.e., when offline data is not available, work has derived stabilizing controllers for LTV systems through the use of predictions of $\theta_t$, e.g., \cite{qu2021stable}. Finally, another line of work focuses on designing regret-optimal controllers for LTV systems \cite{ouyang2017learning, minasyan2021online, luo2022dynamic, lin2022online, han2022learning}. However, with the exception of \cite{qu2021stable}, existing work on \textit{online} control of unknown LTV systems share the common assumption of either of open-loop stability or knowledge of an offline stabilizing controller. Moreover, the disturbances are generally assumed to be zero or stochastic noise independent of the states and inputs.  

In this paper, we propose an online algorithm for stabilizing unknown LTV systems under bounded, potentially adversarial disturbances.  Our approach uses convex body chasing (CBC), which is an online learning problem where one must choose a sequence of points within sequentially presented convex sets with the aim of minimizing the sum of distances between the chosen points \cite{sellke2020chasing, argue2021chasing}. CBC has emerged as a promising tool in online control, with most work making connections to a special case called \textit{nested} convex body chasing (NCBC), where the convex sets are sequentially nested within the previous set \cite{bansa2018nested, bubeck2020chasing}. In particular, \cite{ho2021online} first explored the use of NCBC for learning-based control of time-invariant nonlinear systems. NCBC was also used in combination with System Level Synthesis to design a distributed controller for networked systems \cite{yu2023online} and in combination with model predictive control \cite{yeh2022robust} for LTI system control as a promising alternative to system identification based methods. However, this line of work depends fundamentally on the time invariance of the system, which results in nested convex sets. LTV systems do not yield nested sets and therefore represent a significant challenge.

This work addresses this challenge and presents a novel online control scheme (Algorithm~\ref{alg:main}) based on CBC (non-nested) techniques that guarantees bounded-input-bounded-output (BIBO) stability as a function of the total model variation $\sum_{t=1}^{\infty} \norm{\theta_{t} - \theta_{t-1}}$, without predictions or offline data under {bounded and potentially adversarial} disturbances for unknown LTV systems (\Cref{thrm:iss}). This result implies that when the total model variation is finite or growing sublinearly, BIBO stability of the closed loop is guaranteed (\Cref{cor:stability-bounded,cor:stability-sublinear}).  
In particular, our result depends on a refined analysis of the CBC technique (\Cref{lem:partial-path}) and is based on the perturbation analysis of the Lyapunov equation. This contrasts with previous NCBC-based works for time-invariant systems, where the competitive ratio guarantee of NCBC directly applies and the main technical tool is the robustness of the model-based controller, which is a proven using a Lipschitz bound of a quadratic program in \cite{yu2023online} and is directly assumed to exist in \cite{ho2021online}.

We illustrate the proposed algorithm via numerical examples in \Cref{sec:simulation} to corroborate the stability guarantees. We demonstrate how the proposed algorithm can be used for data collection and complement data-driven methods like \cite{baros2022online,nortmann2020data, pang2018data}. Further, the numerics highlight that the proposed algorithm can be efficiently implemented by leveraging the linearity of \eqref{eq:ltv}  despite the computational complexity of CBC algorithms in general (see \Cref{subsec:implementation} for details).

\textbf{Notation. } We use $\mathbb{S}^{n-1}$ to denote the unit sphere in $\RR^n$ and $\mathbb{N}_+$ for positive integers. For $t,\, s\in \mathbb{N}_+$, we use $[t:s]$ as shorthand for the set of integers $\{t,\,t+1,\,\ldots,s\}$ and $[t]$ for $\{1,\,2,\,\ldots,t\}$. Unless otherwise specified, $\norm{\cdot}$ is the operator norm. We use $\rho(\cdot)$ for the spectral radius of a matrix.


\section{Preliminaries}
In this section, we state the model assumptions underlying our work and review key results for convex body chasing, which we leverage in our algorithm design and analysis. 

\subsection{Stability and model assumptions}
\label{sec:model}

We study the dynamics in \eqref{eq:ltv} and make the following standard assumptions about the dynamics.
\vspace{-0.3cm}
\begin{assumption}
\label{assum:noise_norm}
    The disturbances are bounded: $\infnorm{w_t} \leq W$ for all $t \geq 0$.
\end{assumption}
\vspace{-0.5cm}
\begin{assumption}
\label{assum:compact_set}
    The unknown time-varying system matrices $\sysm$ belong to a known (potentially large) polytope $\Theta$ such that $\theta_t \in \Theta$ for all $t$. Moreover, there exists $\kappa>0$ such that $\norm{\theta} \leq \kappa$ and  $\theta$ is stabilizable for all $\theta \in \Theta$.
\end{assumption}

Bounded and non-stochastic (potentially adversarial) disturbances is a common model both in the online learning and control problems \cite{ramasubramanian2020safety, bai1995membership}. Since we make no assumptions on how large the bound $W$ is, \Cref{assum:noise_norm} models a variety of scenarios, such as bounded and/or correlated stochastic noise, state-dependent disturbances, e.g., the linearization and discretization error for nonlinear continuous-time dynamics, and potentially adversarial disturbances. \Cref{assum:compact_set} is standard in learning-based control, e.g. \cite{cohen2019learning, agarwal2019online}.

We additionally assume there is a quadratic known cost function of the state and control input at every time step $t$ to be minimized, e.g. $x_t^\top Q x_t + u_t^\top R u_t$, with $Q,\,R \succ 0$. For a given LTI system model $\theta=[A\ B]$ and cost matrices $Q,\,R$, we denote $K = \textsc{LQR}(\theta;Q,R)$ as the optimal feedback gain for the corresponding infinite-horizon LQR problem.

\begin{remark}
Representing model uncertainty as convex compact parameter sets where every model is stabilizable is not always possible. In particular, if a parameter set $\Theta$ has a few singular points where $(A,B)$ loses stabilizability such as when $B=0$, a simple heuristic is to ignore these points in the algorithm since we assume the underlying true system matrices $\theta_t$ must be stabilizable.
\end{remark}


\subsection{Convex body chasing}
Convex Body Chasing (CBC) is a well-studied online learning problem \cite{bansa2018nested,bubeck2020chasing}. At every round $t \in \mathbb{N}_+$, the player is presented a convex body/set $\mathcal{K}_t \subset \mathbb{R}^n$.
The player selects a point $q_t \in \mathcal{K}_t$ with the objective of minimizing the cost defined as the total path length of the selection for $T$ rounds, e.g., $\sum_{t=1}^T \|q_{t} - q_{t-1}\|$ for a given initial condition $q_0\not \in \mathcal{K}_1$. 
There are many known algorithms for the CBC problem with a \textit{competitive ratio} guarantee such that the cost incurred by the algorithm is at most a constant factor from the total path length incurred by the offline optimal algorithm which has the knowledge of the entire sequence of the bodies. We will use CBC to select $\theta_t$'s that are consistent with observed data.

\subsubsection{The nested case}
A special case of CBC is the \textit{nested} convex body chasing (NCBC) problem, where $\mathcal{K}_t \subseteq \mathcal{K}_{t-1}$. A known algorithm for NCBC is to select the \textit{Steiner point} of $\mathcal{K}_t$ at $t$ \cite{bubeck2020chasing}. The Steiner point of a convex set $\mathcal{K}$ can be interpreted as the average of the extreme points of $\mathcal{K}$ and is defined as 
$\st(\mathcal{K}):= \mathbb{E}_{v: \|v\|\leq 1} \left[ g_{\mathcal{K}}(v) \right]$, where $g_{\mathcal{K}}(v) := \text{argmax}_{x\in\mathcal{K}} v^\top x $ and the expectation is taken with respect to the uniform distribution over the unit ball. The intuition is that Steiner point remains ``deep" inside of the (nested) feasible region so that when this point becomes infeasible due to a new convex set, this convex set must shrink considerably, which indicates that the offline optimal must have moved a lot. 
Given the initial condition $q_0 \not \in \mathcal{K}_1$, the Steiner point selector achieves competitive ratio of $\mathcal{O}(n)$ against the offline optimal such that for all $T \in \mathbb{N_+}$, $\sum_{t=1}^T \|\st(\mathcal{K}_{t})- \st(\mathcal{K}_{t-1})\| \leq \bigO(n) \cdot \text{OPT}$, where $\text{OPT}$ is the offline optimal total path length.
There are many works that combine the Steiner point algorithm for NCBC with existing control methods to perform learning-based online control for LTI systems, e.g.,  \cite{yu2023online, ho2021online,yeh2022robust}.

\subsubsection{General CBC}
\label{subsec:cbc}
For general CBC problems, we can no longer take advantage of the nested property of the convex bodies. One may consider naively applying NCBC algorithms when the convex bodies happen to be nested and restarting the NCBC algorithm when they are not. However, due to the myopic nature of NCBC algorithms, which try to remain deep inside of each convex set, they no longer guarantee a competitive ratio when used this way.
Instead, \cite{sellke2020chasing} generalizes ideas from NCBC and proposes an algorithm that selects the \textit{functional Steiner point} of the \textit{work function}.

\begin{definition}[Functional Steiner point]\label{defn:supp_fn_fn}
For a convex function $f:\RR^n \to \RR$, the functional Steiner point of $f$ is
\begin{equation}
\label{eq:steiner}
    \st(f) = -n \cdot \dashint_{v:\norm{v}= 1}  f^*(v) \, v \, dv,
\end{equation}
where $\dashint_{x\in\mathcal{S}} f(x) dx$ denotes the normalized value $\frac{\int_{x\in \mathcal{S}} f(x) dx}{\int_{x\in\mathcal{S}} 1 dx}$ of $f(x)$ on the set $\mathcal{S}$, and 
\begin{equation}
    \label{eq:conjugate}
    f^*(v):= \text{inf}_{x\in\RR^n}  f(x) - \inner{x}{v}
\end{equation} 
is the Fenchel conjugate of $f$.
\end{definition}

The CBC algorithm selects the functional Steiner point of the \textit{work function}, which records the smallest cost required
to satisfy a sequence of requests while ending in a given state, thereby encapsulating information about the offline-optimal cost for the CBC problem.
\begin{definition}[Work function]
Given an initial point $q_0 \in \RR^n$, and convex sets $\mathcal{K}_1,\ldots, \mathcal{K}_t \subset \RR^n$, the work function at time step $t$  evaluated at a point $x \in \RR^n$ is given by:
\begin{equation}
\label{eq:work-function}
    \omega_t(x) = \min_{q_s \in \mathcal{K}_s} \norm{x - q_t} + \sum_{s=1}^t \norm{ q_s - q_{s-1} }.
\end{equation}
\end{definition}
Importantly, it is shown that the functional Steiner points of the work functions are valid, i.e., $\st(\omega_t) \in \mathcal{K}_t$ for all $t$ \cite{sellke2020chasing}. On a high level, selecting the functional Steiner point of the work function helps the algorithm stay competitive against the currently estimated offline optimal cost via the work function, resulting in a competitive ratio of $n$ against the offline optimal cost ($\text{OPT}$) for general CBC problems,
\begin{equation}
\label{eq:competitive}
\sum_{t=1}^T \|\st(\omega_{t})- \st(\omega_{t+1})\| \leq n \cdot \text{OPT}.
\end{equation}

Given the non-convex nature of \eqref{eq:steiner} and \eqref{eq:work-function}, we note that, in general, it is challenging to compute the functional Steiner point of the work function. However, in the proposed algorithm, we are able to leverage the linearity of the LTV systems and numerically approximate both objects with efficient computation in \Cref{subsec:implementation}.

\section{Main Results}
\label{sec:main_result}

We present our proposed online control algorithm to stabilize the unknown LTV system \eqref{eq:ltv} under bounded and potentially adversarial disturbances in Algorithm~\ref{alg:main}. After observing the latest transition from $x_{t},\,u_{t}$ to $x_{t+1}$ at $t+1$ according to \eqref{eq:ltv} (line~\ref{algoline:dynamics}), the algorithm constructs the set of all feasible models $\widehat \theta_t$'s (line~\ref{algoline:set}) such that the model is \textit{consistent} with the observation, i.e., there exists an admissible disturbance $\hat w_t$ satisfying \Cref{assum:noise_norm} such that the state transition from $x_{t},\,u_{t}$ to $x_{t+1}$ can be explained by the tuple ($\widehat \theta_t$, $\hat w_t$). We call this set the \textit{consistent model set} $\mathcal{P}_t$ and we note that the unknown true dynamics $\theta_t = [A_t \ B_t]$ belongs to $\mathcal{P}_t$. The algorithm then selects a \textit{hypothesis} model out of the consistent model set $\mathcal{P}_t$ using the CBC algorithm by computing the functional Steiner point \eqref{eq:steiner} of the work function \eqref{eq:work-function} with respect to the history of the consistent parameter sets $\mathcal{P}_1,\, \ldots,\, \mathcal{P}_t$ (line~\ref{algoline:cbc}). In particular, we present an efficient implementation of the functional Steiner point chasing algorithm in \Cref{subsec:implementation} by taking advantage of the fact that $\mathcal{P}_t$'s are polytopes that can be described by intersection of half-spaces. The implementation is summarized in Algorithm~\ref{alg:cbc}. Based on the selected hypothesis model $\hat \theta_t$, a certainty-equivalent LQR controller is synthesized (line~\ref{algoline:lqr}) and the state-feedback control action is computed (line~\ref{algoline:ut}). 

Note that, by construction, at time step $t \in \mathbb{N}_+$ we perform certainty-equivalent control $\widehat K_{t-1}$ based on a hypothesis model $\hat \theta_{t-1}$ computed using retrospective data, even though the control action ($u_t = \widehat K_{t-1} x_t$) is applied to the dynamics ($\theta_t$) that we do not yet have any information about. In order to guarantee stability, we would like for $\widehat K_{t-1}$ to be stabilizing the ``future'' dynamics ($\theta_t$). This is the main motivation behind our choice of the CBC technique instead of regression-based techniques for model selection. Thanks to the competitive ratio guarantee \eqref{eq:competitive} 
 of the functional Steiner point selector, when the true model variation is ``small,'' our previously selected hypothesis model will stay ``consistent'' in the sense that $\widehat K_{t-1}$ can be stabilizing for $\theta_t$ despite the potentially adversarial or state-dependent disturbances. On the other hand, when the true model variation is ``large,'' $\widehat K_{t-1}$ does not stabilize $\theta_t$, and we see growth in the state norm. Therefore, our final state bound is in terms of the total variation of the true model. 

We show in the next section that, by drawing connections between the stability of the closed-loop system and the path length cost of the selected hypothesis model via CBC, we are able to stabilize the unknown LTV system without any identification requirements, e.g., the selected hypothesis models in Algorithm~\ref{alg:main} need not be close to the true models. It is observed that even in the LTI setting, system identification can result in large-norm transient behaviors with numerical stability issues if the underlying unknown system is open-loop unstable or under non-stochastic disturbances; thus motivating the development of NCBC-based online control methods \cite{chen2021black, yu2023online, ho2021online}.  In the LTV setting, it is not sufficient to use NCBC ideas due to the time-variation of the model; however, the intuition for the use of CBC is similar.  In fact, it can be additionally beneficial to bypass identification in settings where the true model is a moving target, thus making identification more challenging. We illustrate this numerically in \Cref{sec:simulation}.

\SetKwInOut{Initialize}{Initialize}
\SetKwInOut{Input}{Input}
\begin{algorithm} 
\LinesNumbered
\SetAlgoLined
\DontPrintSemicolon
\KwIn{$W >0$, $\Theta \subset \RR^{n\times (n+m)}$} 
\Initialize{$u_0 = 0$, $\hat \theta_0 \in \Theta$ }
    \For{$t+1 = 1,2, \ldots$}{ 
     Observe $x_{t+1}$ \nllabel{algoline:dynamics} \\
     Construct consistent set $\mathcal{P}_t := 
     \left\{ \theta = [A , B] : \infnorm{x_{t+1} - A x_t - B u_t} \leq W\right\}  \cap \Theta$ \nllabel{algoline:set}\\
     Select hypothesis model $ \widehat{\theta}_{t} \leftarrow \textsc{CBC}(\{\mathcal{P}_s\}_{s=1}^t; \hat \theta_0 )$ \label{algoline:cbc}\\
    Synthesize controller $\widehat{K}_{t} \leftarrow \textsc{LQR}\left(\widehat{\theta}_{t} ; Q, R\right)$ \nllabel{algoline:lqr}\\
     Compute feedback control input $u_{t+1} = \widehat{K}_{t} x_{t+1}$ \nllabel{algoline:ut}}
    \caption{\textsc{Unknown LTV stabilization}}
    \label{alg:main}
\end{algorithm}

\begin{algorithm}
  \caption{\textsc{CBC}}  
  \label{alg:cbc}
    \LinesNumbered
\SetAlgoLined
\KwIn{$\mathcal{P}_1$, $\ldots$, $\mathcal{P}_t$, $\hat \theta_0$, $N$}
\KwOut{$\hat \theta_t$} 
\For{$k=0,1, \ldots N$}{
Sample $v_i$ uniformly from $\mathbb{S}^{n-1}$\\
$ h_i \leftarrow \eqref{eq:socp}$\\
}
$\hat \theta_t \leftarrow  \textsf{proj}_{\Theta \cap \mathcal{P}_t}\left(-\frac{n}{N} \sum_{i=1}^N h_i v_i\right)$
\end{algorithm}

\subsection{Stability Analysis}

The main result of this paper is the BIBO stability guarantee for Algorithm~\ref{alg:main} in terms of the true model variation and the disturbance bound. We sketch the proof in this section and refer \Cref{appendix:main_proof} for the formal proof. This result depends on a refined analysis of the competitive ratio for the functional Steiner point chasing algorithm introduced in \cite{sellke2020chasing}, which is stated as follows.
\begin{lemma}[Partial-path competitive ratio]
    \label{lem:partial-path}
    For $t\in \mathbb{N}_+$, let $s,\,e \in [t]$ and $s<e$, and let $\Theta\subset \RR^n$ be a convex compact set. Denote $ \hat{\Delta}_{[s,e]}:=\sum_{\tau=s+1}^{e} \norm{ \st(\omega_{\tau}) -  \st(\omega_{\tau-1}) }_F$ as the partial-path cost of the functional Steiner point selector during interval $[s,e]$ and $\{\textsc{OPT}_\tau\}_{\tau = 1}^{t}$ as the (overall) offline optimal selection for $\mathcal{K}_1,\,\ldots ,\, \mathcal{K}_t \subset \Theta$. The functional Steiner point chasing algorithm has the following competitive ratio,
    \begin{align*}
  \hat{\Delta}_{[s,e]} &\leq n \left( \textsf{dia}(\Theta) + 2\kappa + \sum_{\tau=s+1}^{e} \norm{\textsc{OPT}_{\tau} - \textsc{OPT}_{\tau-1}}_F\right).
\end{align*}
on interval $[s,e]$, where $\textsf{dia}(\Theta):= \max_{\theta_1,\, \theta_2 \in \Theta} \norm{\theta_1 - \theta_2}_F $ denotes the diameter of $\Theta$ and $\kappa := \max_{\theta \in \Theta} \norm{\theta}_F$.
\end{lemma}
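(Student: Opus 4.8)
The plan is to mirror the amortized telescoping argument that establishes the global competitive ratio \eqref{eq:competitive} from \cite{sellke2020chasing}, but to keep the telescoping sum restricted to $[s,e]$ so the endpoints do not collapse to the trivial $\omega_0,\omega_t$ pair. First I would record the per-step movement bound. Since the work functions are nondecreasing in $t$ (i.e. $\omega_\tau \ge \omega_{\tau-1}$ pointwise, hence $\omega_\tau^* \ge \omega_{\tau-1}^*$ for the infimal conjugate \eqref{eq:conjugate}), definition \eqref{eq:steiner} gives $\st(\omega_\tau)-\st(\omega_{\tau-1}) = -n\dashint_{\norm{v}=1}(\omega_\tau^*(v)-\omega_{\tau-1}^*(v))\,v\,dv$ by linearity of the integral, and since $\norm{v}=1$ with a nonnegative integrand, $\norm{\st(\omega_\tau)-\st(\omega_{\tau-1})}_F \le n\dashint_{\norm{v}=1}(\omega_\tau^*(v)-\omega_{\tau-1}^*(v))\,dv$. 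Summing over $\tau=s+1,\ldots,e$, the right-hand side telescopes to $\hat{\Delta}_{[s,e]} \le n\dashint_{\norm{v}=1}(\omega_e^*(v)-\omega_s^*(v))\,dv$. This is the only place where the ``partial'' nature enters: instead of $\omega_e^*-\omega_0^*$ we are left with the boundary difference $\omega_e^*-\omega_s^*$, which must now be charged to the offline optimal.

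The second step is a clean closed form for the conjugate on the unit sphere. Substituting \eqref{eq:work-function} into \eqref{eq:conjugate} and minimizing over $x$ first, the inner problem $\inf_x(\norm{x-q_t}-\inner{x}{v})$ evaluates to $-\inner{q_t}{v}$ whenever $\norm{v}=1$, because the residual $\inf_y(\norm{y}-\inner{y}{v})$ equals $0$ on the sphere. Hence, for $\norm{v}=1$, $\omega_t^*(v) = \min_{q_\tau\in\mathcal{K}_\tau}\bigl(\sum_{\tau=1}^t\norm{q_\tau-q_{\tau-1}}_F - \inner{q_t}{v}\bigr)$; that is, the conjugate is exactly the offline path cost with a linear terminal reward in direction $v$.

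Third, I would upper bound the pointwise difference $\omega_e^*(v)-\omega_s^*(v)$ by a splicing argument. Let $\{p_\tau^\circ(v)\}_{\tau\le s}$ attain $\omega_s^*(v)$; feeding the offline-optimal continuation $\textsc{OPT}_{s+1},\ldots,\textsc{OPT}_e$ into the formula for $\omega_e^*(v)$ as a feasible (hence suboptimal) path gives $\omega_e^*(v)-\omega_s^*(v) \le \norm{\textsc{OPT}_{s+1}-p_s^\circ(v)}_F + \sum_{\tau=s+2}^e\norm{\textsc{OPT}_\tau-\textsc{OPT}_{\tau-1}}_F - \inner{\textsc{OPT}_e}{v} + \inner{p_s^\circ(v)}{v}$. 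Integrating over the sphere then isolates three kinds of terms: $-\inner{\textsc{OPT}_e}{v}$ averages to $0$ by the symmetry $\dashint_{\norm{v}=1}v\,dv=0$ (as $\textsc{OPT}_e$ is independent of $v$); $\inner{p_s^\circ(v)}{v}\le\norm{p_s^\circ(v)}_F\le\kappa$ by Cauchy--Schwarz since $p_s^\circ(v)\in\mathcal{K}_s\subseteq\Theta$; and $\norm{\textsc{OPT}_{s+1}-p_s^\circ(v)}_F\le\textsf{dia}(\Theta)$ since both points lie in $\Theta$. Collecting these and multiplying by $n$ yields $\hat{\Delta}_{[s,e]}\le n\bigl(\textsf{dia}(\Theta)+\kappa+\sum_{\tau=s+2}^e\norm{\textsc{OPT}_\tau-\textsc{OPT}_{\tau-1}}_F\bigr)$, which is in fact slightly stronger than the claimed inequality (relaxing $\kappa$ to $2\kappa$ and restoring the nonnegative $\tau=s+1$ summand recovers the stated form).

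The hard part will be the boundary handling in the second and third steps. The conjugate must be evaluated exactly on $\norm{v}=1$, where the infimum defining $\omega_t^*$ is attained only in the limit along direction $v$, so the closed form requires care; and in the splicing step the continuation path and the anchor $p_s^\circ(v)$ must be chosen so that, after integrating over the sphere, the $v$-dependent quantities either cancel by symmetry or are controlled purely by membership in $\Theta$. These controlled terms are precisely the $\textsf{dia}(\Theta)$ and $\kappa$ corrections that replace the free telescoping enjoyed by the global bound \eqref{eq:competitive}.
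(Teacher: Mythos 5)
Your proof is correct, and its first half coincides with the paper's: monotonicity of the work functions gives monotonicity of the conjugates, so the per-step Steiner movements telescope to $n\dashint_{\norm{v}=1}\left(\omega_e^*(v)-\omega_s^*(v)\right)dv$. Where you genuinely diverge is in how this boundary difference is charged to the offline optimal. The paper converts conjugates back to work-function minima, paying $\kappa$ at each endpoint via Cauchy--Schwarz (hence $2\kappa$), identifies $\min_x\omega_e(x)$ and $\min_y\omega_s(y)$ with the path costs of $\textsc{OPT}_{[0,e]}$ and of an auxiliary minimizing trajectory $\textsc{INT}_{[0,s]}$, and then argues by contradiction: if $\textsc{OPT}$'s prefix cost exceeded $\textsc{INT}$'s by more than $\textsf{dia}(\Theta)$, splicing $\textsc{INT}_{[0,s]}$ with $\textsc{OPT}_{[s+1,e]}$ would beat $\textsc{OPT}$, which is impossible. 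You instead stay entirely in conjugate space: using the exact representation $\omega_t^*(v)=\min_{q_\tau\in\mathcal{K}_\tau}\left(\sum_{\tau=1}^t\norm{q_\tau-q_{\tau-1}}-\inner{q_t}{v}\right)$ on the unit sphere (valid because $\inf_y\left(\norm{y}-\inner{y}{v}\right)=0$ when $\norm{v}=1$), you upper bound $\omega_e^*(v)$ by the feasible path obtained by splicing $\textsc{OPT}$'s suffix onto the minimizer of $\omega_s^*(v)$, then integrate: sphere symmetry $\dashint v\,dv=0$ annihilates the $-\inner{\textsc{OPT}_e}{v}$ term, Cauchy--Schwarz pays a single $\kappa$ for $\inner{p_s^\circ(v)}{v}$, and the junction pays $\textsf{dia}(\Theta)$. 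The two splicing arguments are mirror images of one another (the paper lower-bounds the prefix cost of the $\omega_s$-minimizer, you upper-bound $\omega_e^*$ by a spliced feasible path), but your version is direct rather than by contradiction, never needs the auxiliary $\textsc{INT}$ trajectory, and yields the marginally sharper constant $n\left(\textsf{dia}(\Theta)+\kappa+\sum_{\tau=s+2}^{e}\norm{\textsc{OPT}_\tau-\textsc{OPT}_{\tau-1}}_F\right)$, which implies the stated bound. One small remark on the difficulty you flag at the end: the inner infimum in the conjugate is attained exactly at $x=q_t$ (not merely in a limit along $v$), and since the $v$-dependent quantities you integrate are bounded pointwise by constants plus a continuous linear term, no measurable-selection care is actually required, so your ``hard part'' is already resolved by the argument as written.
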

\begin{proof}
    See \Cref{appedix:partial-path}.
\end{proof}

\begin{theorem}[BIBO Stability]
\label{thrm:iss}
Under Assumption~\ref{assum:noise_norm} and \ref{assum:compact_set}, the closed loop of \eqref{eq:ltv} under Algorithm~\ref{alg:main} is BIBO stable such that for all $t \geq 0$, 
\begin{equation*}
     \norm{x_{t}} \leq W \cdot c_1 \sum_{s=0}^{t-2} c_2^{\Delta_{[s,t-1]}} \rho_L^{t-s}
\end{equation*}
where ${\Delta}_{[s,t-1]}:=\sum_{\tau=s+1}^{t-1} \norm{ {\theta}_{\tau} -  {\theta}_{\tau-1} }_F$ is the true model variation, $W$ is the disturbance bound, and $c_1, \, c_2 >0,\, \rho_L\in(0,1) $ are constants that depend on the system-theoretical quantities of the worst-case model in the parameter set $\Theta$.
\end{theorem}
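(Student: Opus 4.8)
The plan is to recast the closed loop as a perturbed version of the (stable) nominal dynamics predicted by the selected hypothesis, and then run a time-varying Lyapunov argument whose per-step growth is controlled by the movement of the CBC selector. Writing $z_\tau = [x_\tau^\top\ u_\tau^\top]^\top$ with $u_\tau = \hat K_{\tau-1}x_\tau$, the true update $x_{\tau+1} = A_\tau x_\tau + B_\tau u_\tau + w_\tau$ can be written as $x_{\tau+1} = \hat M_\tau x_\tau + \delta_\tau$, where $\hat M_\tau := \hat A_{\tau-1} + \hat B_{\tau-1}\hat K_{\tau-1}$ is the \emph{nominal} closed-loop matrix of the hypothesis $\hat\theta_{\tau-1}$ and $\delta_\tau := (\theta_\tau - \hat\theta_{\tau-1})z_\tau + w_\tau$ is an effective disturbance. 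Since $\hat K_{\tau-1} = \textsc{LQR}(\hat\theta_{\tau-1})$ and every element of $\Theta$ is stabilizable, $\hat M_\tau$ is Schur with a Lyapunov certificate $P_{\tau-1}$; by compactness of $\Theta$ one obtains uniform constants $\mu I \preceq P_{\tau-1} \preceq L I$, a uniform contraction $\hat M_\tau^\top P_{\tau-1}\hat M_\tau \preceq \rho_L^2 P_{\tau-1}$ with $\rho_L\in(0,1)$, and a uniform bound on $\norm{\hat K_{\tau-1}}$.

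The crucial observation --- which is exactly what lets us avoid identification --- is that the effective disturbance stays small \emph{without} $\hat\theta_{\tau-1}$ being close to $\theta_\tau$. Indeed, both $\theta_\tau$ and the freshly selected hypothesis $\hat\theta_\tau$ lie in the consistent set $\mathcal{P}_\tau$ (the former because $x_{\tau+1}-\theta_\tau z_\tau = w_\tau$ is admissible, the latter because $\st(\omega_\tau)\in\mathcal{P}_\tau$), so $\infnorm{(\theta_\tau - \hat\theta_\tau)z_\tau} \leq 2W$. Decomposing $(\theta_\tau - \hat\theta_{\tau-1})z_\tau = (\theta_\tau - \hat\theta_\tau)z_\tau + (\hat\theta_\tau - \hat\theta_{\tau-1})z_\tau$ and using $\norm{z_\tau} \leq C\norm{x_\tau}$ gives $\norm{\delta_\tau} \leq c\,W + C\,\epsilon_\tau\norm{x_\tau}$, where $\epsilon_\tau := \norm{\hat\theta_\tau - \hat\theta_{\tau-1}}_F$ is precisely the one-step path length of the functional Steiner point selector. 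Thus the state-dependent part of the effective disturbance is gated by the CBC movement.

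I would then track $V_\tau := \norm{x_\tau}_{P_{\tau-1}} = \sqrt{x_\tau^\top P_{\tau-1}x_\tau}$. Two effects perturb a naive contraction. First, from $x_{\tau+1} = \hat M_\tau x_\tau + \delta_\tau$ and the triangle inequality in the $P_{\tau-1}$ norm, $\norm{x_{\tau+1}}_{P_{\tau-1}} \leq \rho_L V_\tau + c'W + C'\epsilon_\tau V_\tau$, using the disturbance bound above together with $\norm{x_\tau}\leq \mu^{-1/2}V_\tau$. Second, switching the weighting matrix from $P_{\tau-1}$ to $P_\tau$ costs a factor $\sqrt{1 + \mu^{-1}\norm{P_\tau - P_{\tau-1}}}$; here the perturbation analysis of the Lyapunov equation enters, giving $\norm{P_\tau - P_{\tau-1}}\leq L_P\,\epsilon_\tau$ since the solution is Lipschitz in the model over the compact stabilizable set $\Theta$. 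Combining, every factor involving $\epsilon_\tau$ can be absorbed into an exponential, yielding a recursion of the form $V_{\tau+1} \leq \rho_L\, e^{\beta\epsilon_\tau}V_\tau + \tilde c\, W e^{\beta\epsilon_\tau}$ for a constant $\beta>0$.

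Unrolling this recursion over $[s,t]$ produces products $\prod_{\tau}\rho_L\,e^{\beta\epsilon_\tau} = \rho_L^{\,t-s}\exp\!\big(\beta\sum_\tau\epsilon_\tau\big)$, so the final task is to bound the accumulated CBC movement $\hat\Delta_{[s,t-1]} = \sum_\tau\epsilon_\tau$. This is where \Cref{lem:partial-path} is invoked: taking the offline comparator to be the true trajectory $\textsc{OPT}_\tau = \theta_\tau$ (feasible since $\theta_\tau\in\mathcal{P}_\tau$) gives $\hat\Delta_{[s,t-1]} \leq n(\textsf{dia}(\Theta) + 2\kappa + \Delta_{[s,t-1]})$, so $e^{\beta\hat\Delta_{[s,t-1]}} \leq c_1' \, c_2^{\,\Delta_{[s,t-1]}}$ with $c_2 = e^{\beta n}$. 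Substituting back, summing the disturbance contributions from zero initial state, and converting $V_t$ to $\norm{x_t}$ via $\mu, L$ collects all remaining constants into $c_1$ and yields the claimed bound. The main obstacle is engineering the single recursion so that all three $\epsilon_\tau$-dependent perturbations --- the state-dependent effective disturbance, the Lyapunov-matrix drift, and the contraction slack --- fold into one exponential factor $e^{\beta\epsilon_\tau}$; only then does the partial-path competitive ratio exponentiate cleanly into the $c_2^{\,\Delta_{[s,t-1]}}$ amplification of the theorem.
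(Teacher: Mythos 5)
Your proposal is correct in substance but takes a genuinely different route from the paper. The paper exploits consistency to write the closed loop entirely in terms of the \emph{mismatched} pair $\widehat{A}_\tau + \widehat{B}_\tau \widehat{K}_{\tau-1}$ driven by a bounded fictitious disturbance $\hat w_\tau$ (cf.\ \eqref{eq:closed-loop}), and then bounds the resulting matrix products via a sequential-strong-stability decomposition $H_\tau^{1/2} L_\tau H_\tau^{-1/2}$: the perturbation results \Cref{lem:AHA_bound} and \Cref{lem:HH_bound} give contraction only when $\norm{\hat\theta_\tau - \hat\theta_{\tau-1}}_F \leq \epsilon_*$, so the paper must separately count the ``large-movement'' steps ($n_{s,t} \leq \hat\Delta_{[s,t-1]}/\epsilon_*$) and pay fallback constants $h_*, \ell_*$ for them. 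You instead keep the \emph{matched} pair $\widehat M_\tau = \widehat A_{\tau-1} + \widehat B_{\tau-1}\widehat K_{\tau-1}$, which is a contraction in its own Riccati norm exactly and with no perturbation argument, and push all mismatch into a state-dependent disturbance $\delta_\tau = (\theta_\tau - \hat\theta_{\tau-1})z_\tau + w_\tau$; your observation that $\infnorm{(\theta_\tau - \hat\theta_\tau)z_\tau} \leq 2W$ because both models lie in $\mathcal{P}_\tau$ plays exactly the role of the paper's $\hat w_\tau$, and the remaining piece $(\hat\theta_\tau - \hat\theta_{\tau-1})z_\tau$ is gated by the CBC step length. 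The single time-varying Lyapunov recursion $V_{\tau+1} \leq \rho_L e^{\beta\epsilon_\tau} V_\tau + \tilde c W e^{\beta\epsilon_\tau}$ then replaces the paper's three-factor product bound \eqref{eqn:bound_norm_A} and its case analysis, and both proofs finish identically by invoking \Cref{lem:partial-path} with the true trajectory as comparator (your use of it matches the paper's own step \eqref{eq:chase}). Your route is cleaner and avoids \Cref{lem:AHA_bound} entirely; the paper's route yields fully explicit constants from the cited perturbation results.

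The one step you assert without justification is the global bound $\norm{P_\tau - P_{\tau-1}} \leq L_P\,\epsilon_\tau$ for \emph{all} step sizes: Lipschitzness of the Riccati solution is a local statement, and compactness of $\Theta$ alone gives only boundedness, not a uniform Lipschitz constant. This is fixable in two ways: (i) the DARE solution with $Q,R \succ 0$ is analytic at every stabilizable model, hence locally Lipschitz, and on the compact \emph{convex} polytope $\Theta$ (Assumption~\ref{assum:compact_set}) local Lipschitzness upgrades to a uniform Lipschitz constant; or (ii) use the paper's own device --- a quantitative perturbation bound (\Cref{lemma:sensitivity}-type) when $\epsilon_\tau \leq \epsilon_*$, and the trivial bound $\norm{P_\tau - P_{\tau-1}} \leq 2\norm{P_*} \leq (2\norm{P_*}/\epsilon_*)\,\epsilon_\tau$ when $\epsilon_\tau > \epsilon_*$, which recovers your linear-in-$\epsilon_\tau$ bound with $L_P = \max\{L_P^{\mathrm{loc}},\, 2\norm{P_*}/\epsilon_*\}$ and quietly re-encodes the paper's counting argument. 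With either patch your recursion holds uniformly and the theorem follows.
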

\hspace{.1in} \textit{Proof Sketch:}
At a high level, the structure of our proof is as follows. We first use the fact that our time-varying feedback gain $\widehat K_t$ is computed according to a hypothesis model from the \textit{consistent} model set. Therefore, we can characterize the closed-loop dynamics in terms of the consistent models $\hat \theta_t$ and $\widehat K_t$. 
Specifically, consider a time step $t$ where we take the action $u_t = \widehat{K}_{t-1} x_t$ after observing $x_t$. Then, we observe $x_{t+1} = A_t x_t + B_t u_t + w_t$ and select a new hypothesis model $\widehat{\theta}_{t} = [\widehat{A}_{t} \ \widehat{B}_{t}]$ that is consistent with this new observation. Since we have selected a consistent hypothesis model, there is some admissible disturbance $\hat w_t$ satisfying \Cref{assum:noise_norm} such that 
\begin{align*}
x_{t+1}  &= \left( A_t + B_t \widehat{K}_{t-1}  \right) x_t + w_t = \left( \widehat{A}_{t} + \widehat{B}_{t} \widehat{K}_{t-1}  \right) x_t + \hat{w}_t. 
\end{align*}
Without loss of generality, we assume initial condition $x_0 = 0$. We therefore have
\begin{align}
\label{eq:closed-loop}
    x_t =  \hat w_{t-1} + \sum_{s = 0}^{t-2}  \prod_{\tau \in [t-1 : s+1] } \left( \widehat{A}_{\tau} + \widehat{B}_{\tau} \widehat{K}_{\tau-1} \right)    \hat{w}_s.
\end{align}
We have two main challenges in bounding $\norm{x_t}$ in \eqref{eq:closed-loop}:
\begin{enumerate}
\item $\widehat K_t$ is computed using $\hat \theta_t$ in Algorithm~\ref{alg:main}, but is applied to the next time step $\hat \theta_{t+1}$. While we know $\rho(\widehat{A}_{t} + \widehat{B}_{t} \widehat{K}_{t}) < 1$, in \eqref{eq:closed-loop} we have $\widehat{K}_{t-1}$ instead of $\widehat{K}_{t}$. 
\item Naively applying submultiplicativity of the operator norm for \eqref{eq:closed-loop} results in bounding $\norm{ \left( \widehat{A}_{\tau} + \widehat{B}_{\tau} \widehat{K}_{\tau-1} \right) }$. However, even if $\widehat K_{t-1}$ satisfies $\rho(\widehat{A}_{t} + \widehat{B}_{t} \widehat{K}_{t}) < 1$, in general the operator norm can be greater than 1. 
\end{enumerate}
To address the first challenge, our key insight is that by selecting hypothesis models via CBC technique, in any interval where the true model variation is small, our selected hypothesis model also vary little. Specifically, by \Cref{lem:partial-path}, we can bound the partial-path variation of the selected hypothesis models with the true model partial-path variation $\Delta_{[s,e]}$ as follows.
    \begin{align}
  \hat{\Delta}_{[s,e]} &\leq n \left( \textsf{dia}(\Theta) + 2\kappa + \sum_{\tau=s}^{e-1} \norm{\textsc{OPT}_{\tau+1} - \textsc{OPT}_{\tau}}_F\right) \nonumber \\
  &\leq n \left( \textsf{dia}(\Theta) + 2\kappa + \Delta_{[s,e]}\right). \label{eq:chase}
\end{align}
where $\Theta$ and $\kappa$ are from \Cref{assum:compact_set}.
A consequence of \eqref{eq:chase} is that, during intervals where the true model variation is small, we have $\left(\widehat{A}_{t}+\widehat{B}_t \widehat{K}_{t-1} \right) \approx \left(\widehat{A}_{t}+\widehat{B}_t \widehat{K}_{t} \right)$. 

For the second challenge, we leverage the concept of sequential strong stability \cite{cohen2018online}, which allows bounding $\norm{\prod_{\tau \in [t-1 : s+1] } \left( \widehat{A}_{\tau} + \widehat{B}_{\tau} \widehat{K}_{\tau-1} \right)  } $ approximately with $ \prod_{\tau \in [t-1 : s+1] } \rho \left( \widehat{A}_{\tau} + \widehat{B}_{\tau} \widehat{K}_{\tau} \right)  $ times $\bigO\left( \exp(\Delta_{[s,t-1]}) \right)$. 

We now sketch the proof. The helper lemmas are summarized in \Cref{appendix:auxillary} and the formal proof can be found in \Cref{appendix:main_proof}.
Consider $L_t,\, H_t \in \RR^{n\times n}$ with $H_t\succ 0$ such that
\begin{align*}
    \widehat{A}_t + \widehat{B}_t \widehat{K}_{t-1} &:= H_t^{1/2}  L_t  H_t^{-1/2}.
\end{align*}
We use $I_s$ as shorthand for the interval $[t-1:s+1]$. Then each summand in \eqref{eq:closed-loop} can be bounded as
\begin{align}
    & \norm{ \prod_{\tau\in I_s} \left(\widehat A_\tau + \widehat B_\tau \widehat K_{\tau-1}\right) } \nonumber \\
    &\leq \underbrace{\norm{H_{t-1}^{1/2}} \norm{H_{s+1}^{-1/2}}}_{(a)}
        \underbrace{\prod_{k\in I_{s+1}} \norm{H_{k}^{-1/2}  H_{k-1}^{1/2}}}_{(b)}   
         \underbrace{\prod_{\tau\in I_s} \norm{L_{\tau}}}_{(c)} 
          \label{eqn:bound_norm_A}
\end{align}

Therefore showing BIBO stability comes down to bounding individual terms in \eqref{eqn:bound_norm_A}. In particular we will show that by selecting appropriate $H_t$ and $L_t$, term (a) is bounded by a constant $C_H$ that depends on system theoretical properties of the worst-case parameter in $\Theta$. For (b) and (c), we isolate the instances when \begin{equation}
\label{eq:small-movement}
\norm{\widehat \theta_t - \widehat \theta_{t-1}}_F \leq \epsilon 
\end{equation}
for some chosen $\epsilon >0$.
For instances where \eqref{eq:small-movement} holds, we use the perturbation analysis of the Lyapunov equation involving the matrix $ \widehat{A}_{t} + \widehat{B}_{t} \widehat{K}_{t-1}$ (\Cref{lem:HH_bound} for (b) and \Cref{lem:AHA_bound} for (c)) to bound (b) and (c) in terms of the partial-path movement of the selected parameters $\widehat \Delta_{[s,e]}:= \sum_{\tau=s+1}^{e} \norm{ \st(\omega_{\tau+1}) -  \st(\omega_{\tau}) }_F$. Specifically, \Cref{lem:HH_bound} implies 
\begin{align}
\label{eqn:bound_HH_main}
    \norm{H_t^{-1/2}H_{t-1}^{1/2}} &\leq
    \begin{cases}
        e^{\frac{\beta \norm{\widehat \theta_t - \widehat \theta_{t-1}}_F  }{2}}, & \text{if \eqref{eq:small-movement} holds}  \\
        \bar{H} & \mbox{otherwise},
    \end{cases}
\end{align}
where $\beta,\,\bar{H}>1$ are constants. We also show that from \Cref{lem:AHA_bound}, 
\begin{equation}
    \begin{aligned}
\label{eqn:bound_L_main}
    \norm{L_t} & \leq 
    \begin{cases}
    \rho_L & \text{if \eqref{eq:small-movement} holds} \\
    \bar{L} & \mbox{otherwise},
    \end{cases}
\end{aligned}
\end{equation}
for $\rho_L \in (0,1)$ and $\bar{L}>1$ a constant.

We now plug \eqref{eqn:bound_HH_main} and \eqref{eqn:bound_L_main} into  \eqref{eqn:bound_norm_A}.
Denote by $n_{[s,t]}$ the number of pairs $(\tau,\tau-1)$ with $s+1 \leq \tau \leq t-1$ where \eqref{eq:small-movement} fails to hold. Let ${\Delta}_{[s,e]} := \sum_{\tau = s+1}^e \norm{ \theta_{\tau} -  \theta_{\tau -1}}_F$ be the true model  partial-path variation. 
Then \eqref{eqn:bound_norm_A} can be bounded as
\begin{align*}
    &  \norm{\prod_{\tau\in [t-1:s+1]} \left(\widehat A_\tau + \widehat B_\tau \widehat K_{\tau-1}\right) }\\
    & \leq C_H \cdot \bar{H}^{n_{[s,t]}} \cdot e^{\frac{\beta \hat{\Delta}_{[s+1,t-1]} }{2}} \cdot \bar{L}^{n_{[s,t]}} \cdot \rho_L^{t-s-\hat n_{[s,t]}-1}
    \\
    &\leq C_H\left( \frac{\bar{L}\bar{H}}{\rho_L} \right)^{\frac{\hat{\Delta}_{[s,t-1]}}{\epsilon_*}}  e^{ \frac{\beta \hat{\Delta}_{[s+1,t-1]}}{2} } \cdot  \rho_L^{t-s-1}\\
    &\leq C_H \left( \frac{\bar{L}\bar{H}}{\rho_L} \right)^{\frac{ 
 \bar{n} \left( \mathsf{dia}(\Theta)+ 2\kappa + {\Delta}_{[s,t-1]} \right) }{\epsilon_*}}  e^{ \frac{\beta \bar{n} \left( \mathsf{dia}(\Theta)+ 2\kappa + {\Delta}_{[s+1,t-1]} \right)}{2} } \cdot  \rho_L^{t-s-1}\\
    & =: c \cdot c_2^{\Delta_{[s,t-1]}} \rho_L^{t-s},
\end{align*}
for constants $c, \,c_2$ and $\bar{n} := n(n + m)$ for the dimension of the parameter space for $A_t,\,B_t$. In the second inequality, we used the observation that 
$ n_{[s,t]} \leq \frac{\hat{\Delta}_{[s,t-1]}}{\epsilon}$ and in the last inequality we used \Cref{lem:partial-path}. Combined with \eqref{eq:closed-loop} and \Cref{assum:noise_norm}, this proves the desired bound.
\hfill $\blacksquare$

An immediate consequence of \Cref{thrm:iss} is that when the model variation in \eqref{eq:ltv} is bounded or sublinear, Algorithm~\ref{alg:main} guarantees BIBO stability.  This is summarized below.
\begin{corollary}[Bounded variation]
\label{cor:stability-bounded}
Suppose \eqref{eq:ltv} has model variation $\Delta_{[0,t]} \leq M$ for a constant $M$. Then,
\begin{align*}
   \sup_t \norm{x_{t}} &\leq \frac{c_1 \cdot c_2^M}{1-\rho_L}. 
\end{align*}
\end{corollary}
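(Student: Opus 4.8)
The plan is to apply \Cref{thrm:iss} essentially verbatim and then exploit two elementary structural facts: that each partial variation $\Delta_{[s,t-1]}$ appearing inside the sum is dominated by the total variation $M$, and that the resulting uniform upper bound on the growth factor $c_2^{\Delta_{[s,t-1]}}$ can be pulled outside the summation, leaving only a convergent geometric series in $\rho_L$ that is bounded uniformly in $t$.

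First I would note that for every index $s \in \{0,\ldots,t-2\}$ the quantity $\Delta_{[s,t-1]} = \sum_{\tau=s+1}^{t-1} \norm{\theta_\tau - \theta_{\tau-1}}_F$ is a partial (sub)sum of $\Delta_{[0,t]}$, so by the hypothesis of the corollary
\[
\Delta_{[s,t-1]} \;\leq\; \Delta_{[0,t]} \;\leq\; M .
\]
To convert this into a bound on $c_2^{\Delta_{[s,t-1]}}$ I must first confirm that $c_2 \geq 1$. This is immediate from the construction of $c_2$ in the proof of \Cref{thrm:iss}: it is assembled from the factors $\bar{L}\bar{H}/\rho_L$ and $e^{\beta\bar{n}/2}$, both of which exceed $1$ because $\bar{L},\bar{H}>1$ and $\rho_L\in(0,1)$. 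Hence $x \mapsto c_2^{x}$ is nondecreasing and $c_2^{\Delta_{[s,t-1]}} \leq c_2^{M}$ uniformly over all $s$ and $t$.

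With this in hand I would substitute into \Cref{thrm:iss}, factor the now-constant $c_2^{M}$ out of the sum, and reindex by $j = t-s$:
\[
\norm{x_t} \;\leq\; W c_1 \sum_{s=0}^{t-2} c_2^{\Delta_{[s,t-1]}} \rho_L^{t-s}
\;\leq\; W c_1 c_2^{M} \sum_{j=2}^{t} \rho_L^{j}
\;\leq\; \frac{W c_1 c_2^{M}}{1-\rho_L},
\]
where the last step uses $\sum_{j=2}^{t}\rho_L^j \leq \sum_{j=0}^{\infty}\rho_L^j = (1-\rho_L)^{-1}$, convergent precisely because $\rho_L\in(0,1)$. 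Since the final expression is independent of $t$, taking the supremum over $t$ gives the claimed bound (after absorbing the disturbance magnitude $W$ into the constant $c_1$). I do not expect any genuine obstacle here: the corollary is a direct consequence of \Cref{thrm:iss}, and the only points requiring care are verifying $c_2 \geq 1$ so that the monotonicity step is legitimate, and observing that the geometric series is summable \emph{uniformly} in $t$ because $\rho_L$ is bounded strictly below one.
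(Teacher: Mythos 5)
Your proposal is correct and is precisely the ``immediate consequence'' argument the paper intends: bound each partial variation $\Delta_{[s,t-1]}$ by $M$, use $c_2>1$ to pull $c_2^{M}$ out of the sum, and bound the remaining geometric series by $(1-\rho_L)^{-1}$, with $W$ absorbed into the constant. No gaps; the care you take in verifying $c_2\geq 1$ and the uniform-in-$t$ summability is exactly what makes the corollary rigorous.
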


\begin{corollary}[Unbounded but sublinear variation]
\label{cor:stability-sublinear}
Let $\alpha\in(0,1)$ and $t\in \mathbb{N}_+$. Suppose \eqref{eq:ltv} is such that for each 
$k \leq t$, $\Delta_{[k,k+1]} \leq \delta_t := 1/{t^{(1-\alpha)}}$, implying a total model variation $\Delta_{[0,t]} = \bigO(t^{\alpha})$.
Then for large enough $t$, $\rho_L c_2^{\delta_t}\leq \frac{1+\rho_L}{2}$, and therefore
\begin{align*}
\norm{x_k}\leq c_1 \sum_{i=0}^k \left( \rho_L c_2^{\delta_t} \right)^i \leq  \frac{2c_1}{1-\rho_L}.
\end{align*}
\end{corollary}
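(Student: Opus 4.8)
The plan is to obtain \Cref{cor:stability-sublinear} as a direct specialization of the state bound in \Cref{thrm:iss}, reducing the argument to a geometric-series estimate once the true model variation has been controlled term by term. First I would apply \Cref{thrm:iss} at horizon $k$ (with $k \leq t$), which gives
\begin{equation*}
\norm{x_k} \leq W c_1 \sum_{s=0}^{k-2} c_2^{\Delta_{[s,k-1]}} \rho_L^{k-s}.
\end{equation*}
The hypothesis bounds each single-step variation by $\norm{\theta_\tau - \theta_{\tau-1}}_F = \Delta_{[\tau-1,\tau]} \leq \delta_t$ for every step inside the window, so that $\Delta_{[s,k-1]} = \sum_{\tau=s+1}^{k-1}\norm{\theta_\tau - \theta_{\tau-1}}_F \leq (k-1-s)\,\delta_t$.

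Second, I would fold the model-variation factor into the contraction factor. Because $c_2 > 1$ (it is built from the quantities $\bar{L},\bar{H} > 1$ and $\rho_L^{-1} > 1$ raised to positive powers in the proof of \Cref{thrm:iss}), we have $c_2^{(k-1-s)\delta_t} \leq c_2^{(k-s)\delta_t}$, and hence each summand collapses to a single geometric term
\begin{equation*}
c_2^{\Delta_{[s,k-1]}}\rho_L^{k-s} \leq \left(\rho_L\, c_2^{\delta_t}\right)^{k-s}.
\end{equation*}
Re-indexing by $i = k-s$ (so $i$ runs over $2,\dots,k$) and enlarging the range to $i = 0,\dots,k$, which only adds nonnegative terms, while absorbing $W$ into $c_1$, yields exactly the displayed sum $\norm{x_k} \leq c_1 \sum_{i=0}^{k}(\rho_L c_2^{\delta_t})^i$.

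Third, I would verify that the geometric ratio is a genuine contraction for all sufficiently large $t$. Since $\delta_t = t^{-(1-\alpha)} \to 0$ as $t \to \infty$, we have $c_2^{\delta_t} = \exp(\delta_t \ln c_2) \to 1$, hence $\rho_L c_2^{\delta_t} \to \rho_L < 1$; by continuity there is a threshold beyond which $\rho_L c_2^{\delta_t} \leq \tfrac{1+\rho_L}{2} < 1$. Summing the geometric series with this ratio gives
\begin{equation*}
\sum_{i=0}^k (\rho_L c_2^{\delta_t})^i \leq \frac{1}{1-\rho_L c_2^{\delta_t}} \leq \frac{2}{1-\rho_L},
\end{equation*}
which delivers the stated uniform bound $\frac{2c_1}{1-\rho_L}$. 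The companion \Cref{cor:stability-bounded} is even simpler: with $\Delta_{[s,t-1]} \leq \Delta_{[0,t]} \leq M$ one bounds $c_2^{\Delta_{[s,t-1]}} \leq c_2^M$ uniformly, pulls it outside the sum, and closes with $\sum_{s} \rho_L^{t-s} \leq \frac{1}{1-\rho_L}$.

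The calculation is routine; the one point requiring care — and the main conceptual obstacle — is the horizon dependence of $\delta_t$. The per-step tolerance is tied to the very horizon $t$ over which stability is claimed, so I must confirm that a single ratio $\rho_L c_2^{\delta_t}$ controls every term of the window $[0,k]$ simultaneously and that the ``large enough $t$'' threshold is uniform over all $k \leq t$. This is precisely why the $c_2 > 1$ absorption and the limit $c_2^{\delta_t} \to 1$ are essential: together they convert a horizon-dependent accumulation of model drift into a fixed sub-unit geometric rate, which is the mechanism that turns sublinear total variation $\Delta_{[0,t]} = \bigO(t^{\alpha})$ into a uniformly bounded trajectory.
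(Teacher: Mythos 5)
Your proposal is correct and follows exactly the route the paper intends: \Cref{cor:stability-sublinear} is stated as an immediate consequence of \Cref{thrm:iss}, and your steps (per-step bound $\Delta_{[s,k-1]}\leq(k-1-s)\delta_t$, absorbing the drift into the geometric ratio $\rho_L c_2^{\delta_t}$ using $c_2>1$, and the limit $c_2^{\delta_t}\to 1$ to get the ratio below $\tfrac{1+\rho_L}{2}$) are precisely the routine details the paper leaves implicit. Your observation that the threshold on $t$ is uniform over $k\leq t$ because $\delta_t$ depends only on $t$ is the right point of care, and it holds.
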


\Cref{cor:stability-bounded} can be useful for scenarios where the mode of operation of the system changes infrequently and for systems such that $\theta(t) \rightarrow \theta^\star$ as $t \rightarrow \infty$ \cite{hahn1967stability}. As an example, consider power systems where a prescribed set of lines can potentially become disconnected from the grid and thus change the grid topology. \Cref{cor:stability-sublinear} applies to slowly drifting systems \cite{amato1993new}.

\subsection{Efficient implementation of CBC}
\label{subsec:implementation}
In general, implementation of the functional Steiner point of the work function may be computationally inefficient.  However, by taking advantage of the LTV structure, we are able to design an efficient implementation in our setting. The key observation here is that for each $t$, $\mathcal{P}_t$ (Algorithm~\ref{alg:main}, line~\ref{algoline:set}) can be described by the intersection of half-spaces because the ambient parameter space $\Theta$ is assumed to be a polytope and the observed online transition data from $x_t,\, u_t$ to $x_{t+1}$ specifies two half-space constraints at each time step due to linearity of \eqref{eq:ltv}. Our approach to approximate the functional Steiner point for chasing the consistent model sets is inspired by \cite{argue2021chasing} where second-order cone programs (SOCPs) are used to approximate the (nested set) Steiner point of the sublevel set of the work functions for chasing half-spaces.

Denote $\{(a_i,b_i)\}_{i=1}^{p_t}$ as the collection of $p_t$ half-space constraints describing $\mathcal{P}_t$, i.e., $a_i^{\top} \theta \leq b_i$. To approximate the integral for the functional Steiner point \eqref{eq:steiner} of $\omega_t$, we sample $N$ number of random directions $v \in \mathbb{S}^{n-1}$, evaluate the Fenchel conjugate of the work function $\omega^*_t$ at each $v$ with an SOCP, and take the empirical average. Finally we project the estimated functional Steiner point back to the set of consistent model $\mathcal{P}_t \cap \Theta$. Even though the analytical functional Steiner point \eqref{eq:steiner} is guaranteed to be a member of the consistent model set, the projection step is necessary because we are integrating  numerically, which may result in an approximation that ends up outside of the set. We summarize this procedure in Algorithm~\ref{alg:cbc}. 
Specifically, given a direction $v \in \mathbb{S}^{n-1}$, the Fenchel conjugate of the work function at time step $t$ is 
\begin{align*}
    {\omega^*_t}(v) &= \inf_{x \in \RR^n} \omega_t(x)-\inner{x}{v} \\
    &= \min_{\substack{x \in \RR^n\\ q_s\in\mathcal{K}_s}} \sum_{s=1}^t \norm{ q_s - q_{s-1} } + \norm{x - q_t} - \inner{x}{v}.
\end{align*}
This can be equivalently expressed as the following SOCP with decision variables $x,q_1,\ldots, q_t,\lambda, \lambda_1,\ldots, \lambda_t$:
\begin{equation}
\label{eq:socp}
\begin{aligned}
&\min_{\substack{x,q_1,\ldots, q_t \\ \lambda, \lambda_1,\ldots, \lambda_t}}   \quad && \, \lambda + \sum_{s=1}^t \lambda_s - \inner{v}{x} \\
&\,\,\,\text{s.t.} && \norm{q_s - q_{s-1}}  \leq \ \lambda_s,  \quad \text{for }s\in[t] \\
& && \norm{x - q_t}  \leq \ \lambda & \\
& && a_i^{\top}q_s \leq b_i,\quad \text{for }i\in [p_s],\, s\in[t]
\end{aligned}
\end{equation}

Another potential implementation challenge is that the number of constraints in the SOCP \eqref{eq:socp} grows linearly with time due to the construction of the work function \eqref{eq:work-function}. This is a common drawback of online control methods based on CBC and NCBC techniques and can be overcome through truncation or over-approximation in of the work functions in practice.  Additionally, if the LTV system is periodic with a known period, then we can leverage Algorithm~\ref{alg:main} during the initial data collection phase. Once representative (persistently exciting) data is available, one could employ methods like \cite{nortmann2020data} to generate a stabilizing controller for the unknown LTV system. In \Cref{sec:simulation}, we show that data collection via Algorithm~\ref{alg:main} results in a significantly smaller state norm than random noise injection when the system is unstable.

\section{Simulation}
\label{sec:simulation}
In this section, we demonstrate Algorithm~\ref{alg:main} in two LTV systems. Both of the systems we consider are open-loop unstable, thus the algorithms must work to stabilize them. We use the same algorithm parameters for both, with $\Theta = [-2,\,3]^2$, LQR cost matrices $Q = I$ and $R = 1$. 

\subsection{Example 1: Markov linear jump system}
We consider the following Markov linear jump system (MLJS) model from \cite{xiong2006stabilization}, with
$$
\begin{aligned}
& A_1=\left[\begin{array}{cc}
1.5 & 1 \\
0 & 0.5
\end{array}\right], \quad A_2=\left[\begin{array}{cc}
0.6 & 0 \\
0.1 & 1.2
\end{array}\right], \quad B_1=\left[\begin{array}{l}
0 \\
1
\end{array}\right], \\
& B_2=\left[\begin{array}{l}
1 \\
1
\end{array}\right], \quad \Pi=\left[\begin{array}{ll}
0.8 & 0.2 \\
0.1 & 0.9
\end{array}\right]
\end{aligned}
$$
where $\Pi$ is the transition probability matrix from $\theta_1$ to $\theta_2$ and vice versa. We inject uniformly random disturbances such that $w_t \in \{-10\mathds{1},\, -3\mathds{1},\, 3\mathds{1}\}$ where $\mathds{1}$ is the all-one vector. We set the disturbances to be zero for the last 10 time steps to make explicit the stability of the closed loop. We implement certainty-equivalent control based on online least squares (OLS) with different sliding window sizes $L =5,\,10,\,20$ and a exponential forgetting factor of $0.95$ \cite{jiang2004revisit} as the baselines.

We show two different MLJS models generated from 2 random seeds and show the results in Figure~\ref{fig:mljs}. For both systems, the open loop is unstable. In Figure~\ref{fig:mljs_1} the OLS-based algorithms fail to stabilize the system for window size of $L=20$, while stabilizing the system but incurring larger state norm than the proposed algorithm for $L=5,\,10$. On the other hand, in Figure~\ref{fig:mljs_2}, OLS with $L=5$ results in unstable closed loop. This example highlights the challenge of OLS-based methods, where the choice of window size is crucial for the performance. Since the underlying LTV system is unknown and our goal is to control the system \textit{online}, it is unclear how to select appropriate window size to guarantee stability for OLS-based methods a priori. In contrast, Algorithm~\ref{alg:main} does not require any parameter tuning.
 
We note that while advanced least-squares based identification techniques that incorporate sliding window with \textit{variable} length exist, e.g. \cite{luo2022dynamic, jiang2004revisit}, due to the unknown system parameters, it is unclear how to choose the various algorithm parameters such as thresholds for system change detection. Therefore, we only compare Algorithm 1 against fixed-length sliding window OLS methods as baselines.

\begin{figure}[h]
     \centering
     \begin{subfigure}[a]{0.4\textwidth}
         \centering
         \includegraphics[width=\textwidth, trim = 1cm 0.66cm 0cm 0.4cm, clip] 
         {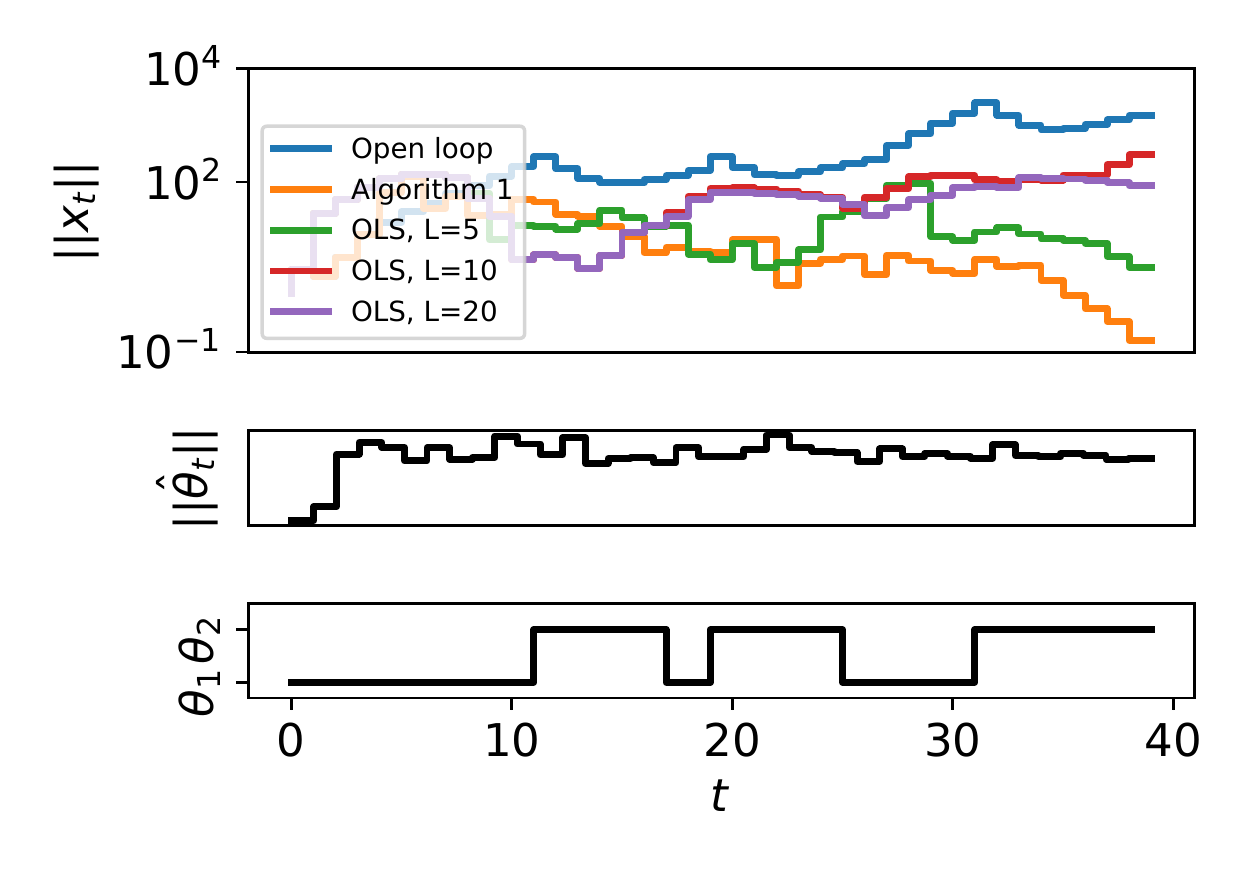}
         \caption{closed loop of the system generated with seed \# 1}
         \label{fig:mljs_1}
     \end{subfigure}
     \begin{subfigure}[b]{0.4\textwidth}
         \centering
         \includegraphics[width=\textwidth,trim = 1cm 0.65cm 0cm 0cm, clip]{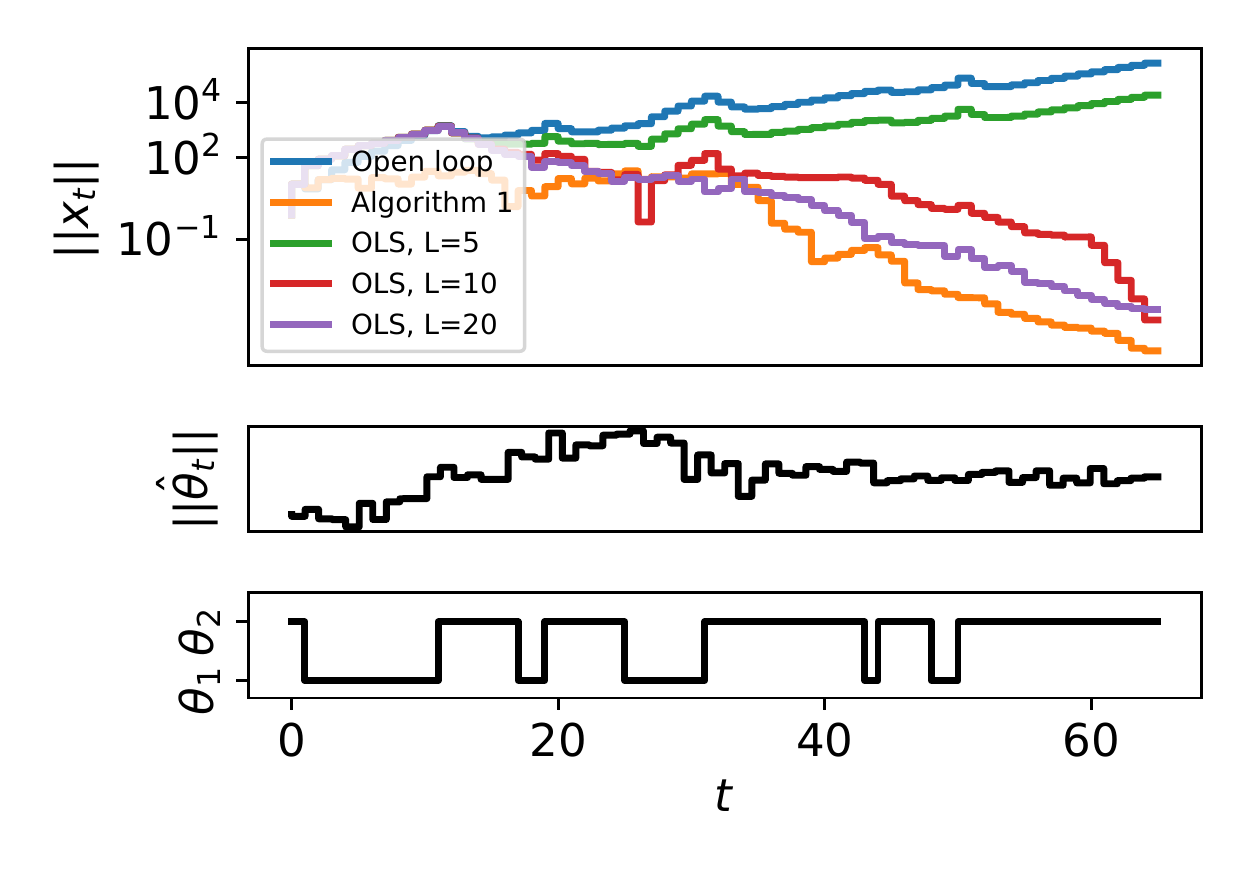}
         \caption{closed loop of the system generated with seed \# 2}
         \label{fig:mljs_2}
     \end{subfigure}
        \caption{Markov linear jump system for two different random seeds. For each seed: \textbf{Top} plot shows the state norm trajectories of the proposed algorithm, certainty-equivalent control based on online least squares (OLS) with different sliding window sizes, and the open loop. \textbf{Middle} plot shows the norm of the selected hypothesis model via Algorithm~\ref{alg:cbc}. \textbf{Bottom} plot shows the true model switches. }
         \label{fig:mljs}
\end{figure}


\vspace{-0.5cm}
\subsection{Example 2: LTV system }
Our second example highlights that Algorithm~\ref{alg:main} is a useful data-collection alternative to open-loop random noise injection. We consider the LTV system from \cite{nortmann2020data,pang2018data}, with
$$
\begin{aligned}
& A(k)=\left[\begin{array}{cc}
1.5 & 0.0025 k \\
-0.1 \cos (0.3 k) & 1+0.05^{3 / 2} \sin (0.5 k) \sqrt{k}
\end{array}\right], \\
& B(k)=0.05\left[\begin{array}{c}
1 \\
\frac{0.1 k+2}{0.1 k+3}
\end{array}\right] .
\end{aligned}
$$
where we modified $A(1,1)$ from 1 to 1.5 to increase the instability of the open loop in the beginning; thus making it more challenging to stabilize. We consider no disturbances here, which is a common setting in direct data-driven control, e.g., \cite{nortmann2020data, rotulo2022online, baros2022online}. In particular, we compare the proposed algorithm against randomly generated bounded inputs from $\textsf{UNIF}[-1,1]$. We also modify the control inputs from Algorithm~\ref{alg:main} to be $u_t = \hat{K}_{t-1} x_t + \eta_t\cdot\mathds{1}$ with $\eta_t \sim \textsf{UNIF}[-1,1]$ so that we can collect rich data in the closed loop. This is motivated by the growing body of data-driven control methods such as \cite{nortmann2020data, baros2022online, pang2018data} that leverage sufficiently rich offline data to perform control design for unknown LTV systems. However, most of these works directly inject random inputs for data collection. It is evident in Figure~\ref{fig:ltv} that when the open-loop system is unstable it may be undesirable to run the system without any feedback control. Therefore, Algorithm~\ref{alg:main} complements existing data-driven methods by allowing safe data collection with significantly better transient behavior.
\begin{figure}
    \centering
    \includegraphics[width=0.8\columnwidth, trim = 0.1cm 0.66cm 0.1cm 0.4cm, clip]{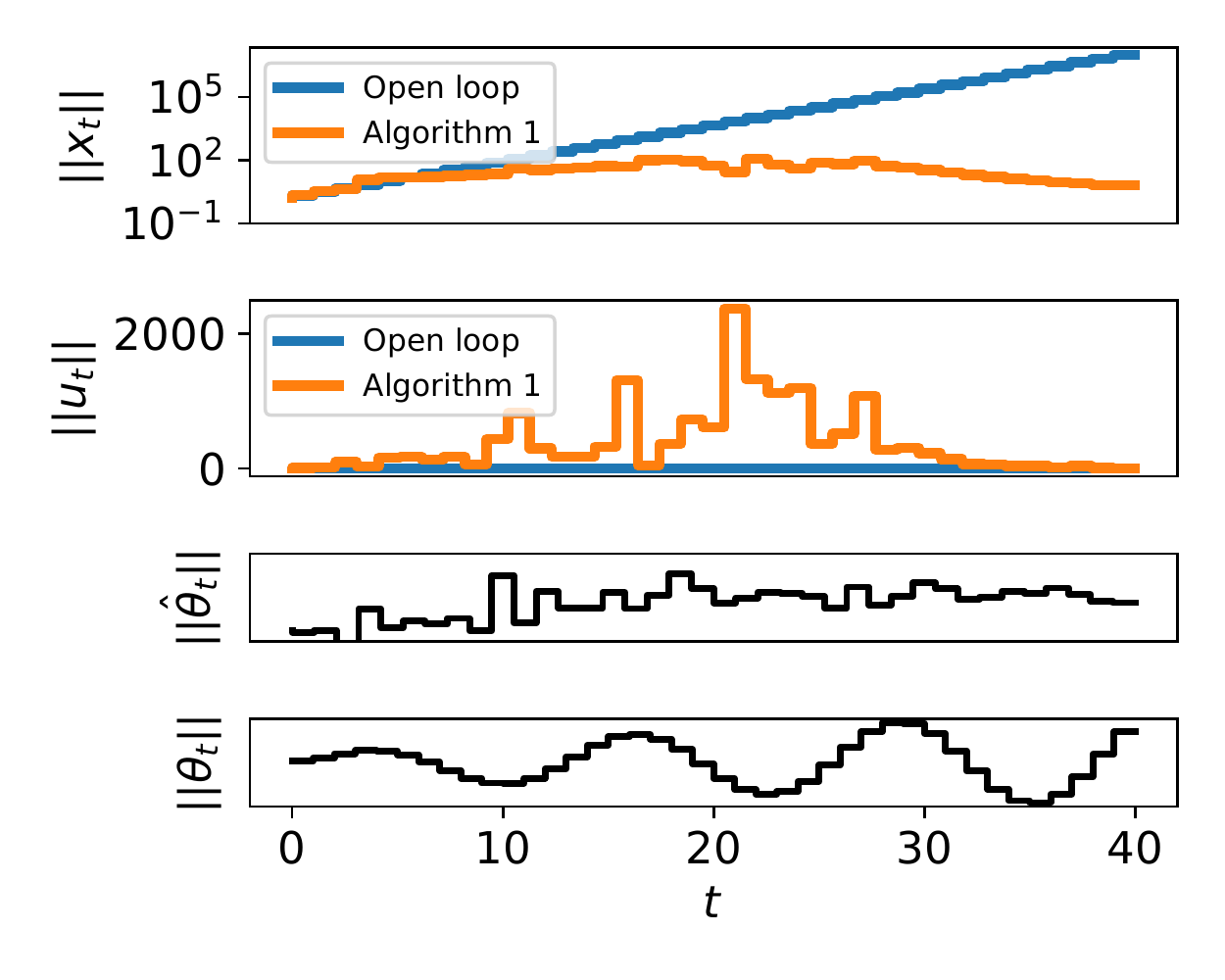}
    \caption{Simulation result for the LTV system in example 2. Here we plot the the state and control norm, as well as the selected hypothesis model via CBC $\hat \theta_t$ and true models $\theta_t$.   }
    \label{fig:ltv}
\end{figure}

\section{Concluding remarks}
\label{sec:conclusion}
In this paper, we propose a model-based approach for stabilizing an unknown LTV system under arbitrary non-stochastic disturbances in the sense of bounded input bounded output under the assumption of infrequently changing or slowly drifting dynamics. Our approach uses ideas from convex body chasing (CBC), which is an online problem where an agent must choose a sequence of points within sequentially presented convex sets with the aim of minimizing the sum of distances between the chosen points. The algorithm requires minimal tuning and achieves significantly  better performance than the naive online least squares based control. Future work includes sharpening the stability analysis to go beyond the BIBO guarantee in this work, which will require controlling the difference between the estimated disturbances and true disturbances. Another direction is to extend the current results to the networked case, similar to \cite{yu2023online}.

\bibliographystyle{IEEEtran}
\bibliography{reference.bib}

\appendix
\subsection{Proof of \Cref{lem:partial-path}}
\label{appedix:partial-path}
We have
\begin{align}
    \sum_{\tau=s+1}^{e} \norm{ \widehat{\theta}_{\tau} -  \widehat{\theta}_{\tau-1} }_F &= \sum_{\tau=s+1}^{e} \norm{\st(\omega_{\tau}) - \st(\omega_{\tau-1})}_F  \nonumber \\
    &\stackrel{(a)}{\leq}  n \dashint_{v} \left(\sum_{\tau=s+1}^{e} \abs{\omega^*_{\tau}(v) - \omega^*_{\tau-1}(v)}\right) \,v\,dv \nonumber \\
    &\stackrel{(b)}{=} n \dashint_{v} \left(\sum_{\tau=s+1}^{e} \omega^*_{\tau}(v) - \omega^*_{\tau-1}(v) \right) \,v\,dv \nonumber \\
    & =  n \dashint_{v} \left(\omega^*_{e}(v) - \omega^*_{s}(v) \right)\,v\,dv \nonumber \\
    &\stackrel{(c)}{\leq} n \cdot ( \min_{x} \omega_e(x) - \min_{y} \omega_s(y) + 2 \kappa) \label{eq:intermediate-step}
\end{align}
where (a) is due to the definition \eqref{eq:steiner}. For (b), we used the observation that $\omega^*_t(v)$ is non-decreasing in time. For (c), by definition of the Fenchel conjugate \eqref{eq:conjugate}, we have that $\omega^*_e(v) = \text{inf}_{x} \omega_e(x) -\inner{x}{v}$. Denote $(x^\star, q_1^\star,\ldots,q_e^\star)$ as the optimal solution to the problem $\min_x \omega_e(x)$. It is clear that $\omega^*_e(v) \leq  \omega_e(x^\star) -\inner{x^\star}{v} \leq \min_x \omega_e(x) + \kappa$ where in the last inequality we used Cauchy-Shwarz and $\kappa := \max_{\theta \in \Theta} \norm{\theta}_F$. Similarly, we also have $\omega_s^*(v) \geq \inf_y \omega_s(y) - \kappa$.

Denote $\OPT{e}$ as the minimizing trajectory $(\textsc{OPT}_0,\, \ldots, \textsc{OPT}_e)$ to $\min_x \omega_e(x)$ where $\text{argmin}_x \omega_e(x) = \textsc{OPT}_e$. This last equality is by the observation that if $x^\star:= \text{argmin}_x \omega_e(x) \not = \textsc{OPT}_e$, then $\omega_e(\textsc{OPT}_e) \leq \omega_e(x^\star)$ by definition \eqref{eq:work-function}, thus contradicting that $x^\star$ is defined to be the minimizer of $\omega_e$. 
We also denote $\INT{s}$ as the minimizing trajectory to $\min_y \omega_s(y)$. To reduce notation, we denote $\dOPT{s}{e}:=\sum_{\tau=s+1}^e \norm{\textsc{OPT}_{\tau}-\textsc{OPT}_{\tau-1} }_F$ and $\Delta^{\textsc{INT}}_{[s,e]} :=\sum_{\tau=s+1}^e \norm{\textsc{INT}_{\tau}-\textsc{INT}_{\tau-1} }_F$.
Then we have
\begin{align*}
    \eqref{eq:intermediate-step} &= n \cdot \left( \dOPT{0}{e} - \dINT{0}{s} + 2 \kappa\right)\\
    &\stackrel{(c)}{\leq} n \cdot \left( \dOPT{0}{e} - \dOPT{0}{s} + \textsf{dia}(\Theta) + 2 \kappa \right)\\
    &= n \cdot \left(\dOPT{s}{e}  + \textsf{dia}(\Theta) + 2 \kappa\right).
\end{align*}
where (c) holds because if $\sum_{\tau=1}^{s} \norm{\textsc{OPT}_{\tau}-\textsc{OPT}_{\tau-1} }_F  > \sum_{\tau=1}^{s} \norm{\textsc{INT}_{\tau}-\textsc{INT}_{\tau-1} }_F + \textsf{dia}(\Theta)$ and $\textsc{OPT}_{[0,s]} \not = \textsc{INT}_{[0,s]} $, then we can replace the $[0,s]$ portion of the optimal trajectory $\OPT{e}$ with $\INT{s}$ and achieve a lower cost for $\omega_e(\textsc{OPT}_e)$, thus contradicting the optimality of $\OPT{e}$. To see why the fictitious trajectory $\left(\textsc{INT}_{[0,s]}, \textsc{OPT}_{[s+1,e]}\right)$ achieves lower cost than $\textsc{OPT}_{[0,e]}$, we compare the total movement cost during the interval $[0,s+1]$,
\begin{align*}
    &\sum_{\tau=1}^s \norm{\textsc{INT}_\tau - \textsc{INT}_{\tau-1}}_F + \norm{\textsc{OPT}_{s+1} - \textsc{INT}_{s}}_F\\ &\leq \sum_{\tau=1}^s \norm{\textsc{INT}_\tau - \textsc{INT}_{\tau-1}}_F + \norm{\textsc{OPT}_{s+1} - \textsc{OPT}_{s}}_F\\ & \qquad + \norm{\textsc{OPT}_{s} - \textsc{INT}_{s}}_F\\
    &\leq \sum_{\tau=1}^s \norm{\textsc{INT}_\tau - \textsc{INT}_{\tau-1}}_F + \norm{\textsc{OPT}_{s+1} - \textsc{OPT}_{s}}_F + \textsf{dia}(\Theta)\\
    &< \sum_{\tau=1}^s \norm{\textsc{OPT}_\tau - \textsc{OPT}_{\tau-1}}_F + \norm{\textsc{OPT}_{s+1} - \textsc{OPT}_{s}}_F, 
\end{align*}
which means the fictitious trajectory achieves lower overall cost. Therefore (c) must hold.
\hfill $\blacksquare$

\subsection{Auxiliary results}
\label{appendix:auxillary}

Here we summarize the helper lemmas used in the proof sketch of \Cref{thrm:iss}. First, we define some useful notation.

\noindent\textbf{Lyapunov equation.} Let $X,Y \in \RR^{n \times n}$ with $Y = Y^\top \succ 0$ and $\rho(X) < 1$. Define $\dlyap(X,Y)$ to be the unique positive definite solution $Z$ to the Lyapunov equation $X^\top Z X - Z = Y$. 
For a stabilizable system $(A,B)$ with optimal infinite-horizon LQR feedback $K:=K^*( [ A \ B])$ with cost matrices $Q, R = I$, we define 
$$P(A,B) = \dlyap(A+BK^*( [ A \ B]), \,I_n + K^*( [ A \ B])^\top K^*( [ A \ B]) )$$ and 
$$H(A,B) = \dlyap(A+BK^*( [ A \ B]),\, I_n).$$ 
We also define the shorthand for the following:
\begin{equation}
\label{eq:PandH}
   {P}_t := P(\widehat{A}_t, \widehat{B}_t), \quad {H}_t := H(\widehat{A}_t, \widehat{B}_t).
\end{equation}

\noindent \textbf{Constants. } Throughout the proof, we will reference the following system-theoretical constants for the parameter set $\Theta$ defined in \Cref{assum:compact_set}:
    \begin{align*}
    &\norm{K_*} := \sup_{ [A \ B]\in \Theta} \norm{K^*([A \ B])}, \gamma_*:= \max_{ [A \ B]\in\Theta} \norm{A+BK^*([A \ B])}.
    \end{align*}
We also quantify the stability of every model in $\Theta$ under its corresponding optimal LQR gain. Let $$C_*>0, \quad  r_*\in(0,1)$$ be
such that for all $\theta :=[A \ B]\in \Theta$, $K:=K^*(\theta)$, and $i\in \mathbb{N}_+$,
$ \norm{\left( \left( A + BK\right)^T\right)^i} \cdot \norm{\left( A + BK\right)^i}  \leq C_* r_*^{2i} $.
By \Cref{lemma:householder} which is stated below and \Cref{assum:compact_set}, such $C_*$ and $r_*$ always exist. 
Further, we define 
\begin{align*}
\norm{P_*} &:= \sup_{ [A \ B]\in \Theta} \norm{P(A,B)},\quad \norm{H_*} := \sup_{ [A \ B]\in \Theta} \norm{H(A,B)}, \\
\epsilon_* &:= 1/\left(54 \norm{P_*}^5\right), \quad c_*:=  \max_{ [A \ B]\in\Theta} \frac{\lambda_{\max}H(A,B)}{\lambda_{\min}H(A,B)},\\
  h_* &:=\sup_{ [A_1 \ B_1],\, [A_2 \ B_2] \in \Theta} \norm{H(A_1,B_1)^{1/2}}\norm{H(A_2,B_2)^{-1/2}}.
\end{align*}
To justify the existence of these constants, note that discrete-time optimal LQR controller has guaranteed stability margin \cite{shaked1986guaranteed} and that by \Cref{lemma:householder} and the fact that the solution to Lyapunov equation has the following closed form, 
\begin{equation}
\label{eq:lyapuv}
P(A,B) = \sum_{i=0}^\infty \left((A+BK)^\top\right)^i (I+K^\top K)(A+BK)^i,
\end{equation}
we have that for all $[A,B]\in \Theta$,
\begin{align*}
    \norm{P(A,B)} & \leq \left( 1 + \norm{K}^2 \right) \left( 1 + \sum_{i=1}^{\infty} \norm{ \left( \left(A + BK\right)^\top \right)^i } \norm{ \left(A+BK \right)^i } \right)\\
    & \leq \frac{\left(1 + \norm{K_*}^2\right) \left(1-r_*^2 + C_*\right)}{1-r_*^2} =: \norm{P_*}.
\end{align*}
We can similarly derive $ \norm{H_*}$. 
By definition of the Lyapunov solution \eqref{eq:lyapuv}, $\norm{P_*}\geq \norm{H_*} \geq 1$.

\begin{lemma}[{\protect{\cite[page 183]{householder2013theory}}}]
\label{lemma:householder}
For a matrix $A \in \RR^{n \times n}$, with $\rho := \rho(A)$, there exist constants $\kappa_1,\kappa_2$ such that for any positive integer $i$
\[ \kappa_1 \rho^i i^{n_1-1}  \leq  \norm{A^i} \leq \kappa_2 \rho^i i^{n_1-1} \]
where $n_1$ is the size of the largest Jordan block corresponding to eigenvalue of $\rho$ in Jordan block form representation of $A$.
\end{lemma}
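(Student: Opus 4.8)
The plan is to prove the two-sided bound by passing to the Jordan canonical form of $A$ and analyzing the growth of the powers of a single Jordan block. First I would write $A = SJS^{-1}$ with $J$ block-diagonal in Jordan form. Since $A^i = SJ^iS^{-1}$, submultiplicativity gives $\norm{A^i} \leq \norm{S}\norm{S^{-1}}\norm{J^i}$, and rearranging $J^i = S^{-1}A^iS$ also yields $\norm{A^i} \geq \norm{J^i}/(\norm{S}\norm{S^{-1}})$. Because $S$ depends only on $A$, the factors $\norm{S}$ and $\norm{S^{-1}}$ are absorbed into the constants $\kappa_1,\kappa_2$, so it suffices to bound $\norm{J^i}$ from above and below by multiples of $\rho^i i^{n_1-1}$.

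For a single Jordan block of size $k$ with eigenvalue $\lambda$, I would write it as $\lambda I + N$ where $N$ is the nilpotent upshift with $N^k = 0$. The binomial theorem then gives $J_\lambda^i = \sum_{j=0}^{k-1}\binom{i}{j}\lambda^{i-j}N^j$, so the $j$-th superdiagonal of $J_\lambda^i$ carries the entry $\binom{i}{j}\lambda^{i-j}$. Using $\binom{i}{j}\leq i^j$ and $|\lambda|\leq\rho$, every entry of such a block is bounded by $i^{k-1}\rho^{i-k+1}$. For a block with eigenvalue $\mu$ of modulus strictly less than $\rho$, any entry is at most $i^{k'-1}|\mu|^{i}$, which is $o(\rho^i i^{n_1-1})$ since the geometric gap $(|\mu|/\rho)^i$ beats every fixed power of $i$; hence only blocks with $|\lambda|=\rho$ can be dominant, and among these the largest has size $n_1$. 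Summing the entrywise bounds over all blocks yields $\norm{J^i}\leq\kappa_2'\rho^i i^{n_1-1}$ for all large $i$, which transfers to the upper bound on $\norm{A^i}$.

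For the lower bound I would isolate the top-right corner entry of a largest critical block: taking a Jordan block of size $n_1$ with $|\lambda|=\rho$, its $(1,n_1)$ entry in $J_\lambda^i$ equals $\binom{i}{n_1-1}\lambda^{i-n_1+1}$, of magnitude $\binom{i}{n_1-1}\rho^{i-n_1+1}$. Since $\binom{i}{n_1-1}=\frac{i(i-1)\cdots(i-n_1+2)}{(n_1-1)!}\sim i^{n_1-1}/(n_1-1)!$, this single entry is at least a constant multiple of $\rho^i i^{n_1-1}$ for large $i$. Because the operator norm dominates the magnitude of any individual matrix entry, i.e. $\norm{J^i}\geq|e_p^\top J^i e_q|$, I obtain $\norm{J^i}\geq\kappa_1'\rho^i i^{n_1-1}$, and dividing by $\norm{S}\norm{S^{-1}}$ gives the claimed lower bound on $\norm{A^i}$.

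The main obstacle is the lower bound rather than the upper bound: I must ensure the dominant-block contribution is not annihilated by passing through $S$ and $S^{-1}$, which is handled cleanly by lower-bounding $\norm{A^i}$ via $\norm{J^i}/(\norm{S}\norm{S^{-1}})$ and then extracting the single surviving corner entry rather than arguing about the whole matrix. The remaining care is the asymptotic matching of the polynomial degree---the leading term of $\binom{i}{n_1-1}$ is exactly of degree $n_1-1$, so both bounds carry the same power of $i$---together with the degenerate case $\rho=0$ (nilpotent $A$), where $A^i=0$ for $i\geq n$ and the bound holds trivially past the nilpotency index; in the control setting $\rho\in(0,1)$, so this case is immaterial.
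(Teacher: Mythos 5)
The paper does not prove this lemma at all: it is imported by citation from Householder's book, so there is no internal proof to compare against. Your Jordan-form argument is the standard proof of this classical fact and is essentially correct: the similarity $A^i = SJ^iS^{-1}$ transfers two-sided bounds between $\norm{A^i}$ and $\norm{J^i}$ at the cost of the condition number of $S$, the binomial expansion $J_\lambda^i = \sum_{j<k}\binom{i}{j}\lambda^{i-j}N^j$ gives the entrywise upper bounds, and extracting the $(1,n_1)$ corner entry $\binom{i}{n_1-1}\lambda^{i-n_1+1}$ of a largest block on the spectral circle gives the matching lower bound, since the operator norm dominates every entry. Two small points deserve an explicit sentence in a full write-up. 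First, your bounds are established only for $i$ large (the asymptotics $\binom{i}{n_1-1}\sim i^{n_1-1}/(n_1-1)!$ and the domination of sub-spectral blocks both require $i\geq i_0$); to get the claim for \emph{every} positive integer $i$, note that when $\rho>0$ the finitely many remaining values satisfy $0<\norm{A^i}$ and $0<\rho^i i^{n_1-1}$, so $\kappa_1$ can be shrunk and $\kappa_2$ enlarged to cover them. Second, you correctly flag that the statement degenerates when $\rho=0$ (a nilpotent $A$ violates the upper bound for $i$ below the nilpotency index), which is harmless here because the lemma is only invoked for closed-loop matrices $A+BK$ whose relevant spectral radius is positive; stating this exclusion is appropriate since the lemma as written is silently assuming $\rho>0$.
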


\begin{lemma}[{\protect{\cite[Proposition 6]{simchowitz2020naive}}}]
\label{lem:BK_bound}
Let $\Theta = [A \  B]$ be a stabilizable system, with optimal controller $K := K^*( \theta)$ and $P := P(A,B)$. Let $\hat{\theta} = [ \hat{A} \ \hat{B} ]$ be an estimate of $\theta$, $\hat{K} := K^*(\hat{\theta})$ the optimal controller for the estimate,
and 
$\epsilon:= \max\left\{\norm{A - \hat{A}}, \norm{B - \hat{B}} \right\}$. Then if $\alpha := 8 \norm{P}^2 \epsilon < 1$:
\[ \norm{ B \left(  \hat{K} - K \right) } \leq 8 (1-\alpha)^{-7/4} \norm{P}^{7/2} \epsilon.
\]
\end{lemma}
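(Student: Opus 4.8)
Since the statement is quoted as Proposition~6 of \cite{simchowitz2020naive}, the formal justification is by reference; here I sketch how I would reprove it from discrete Riccati perturbation theory. Writing $A_K := A + BK$ for the Schur-stable closed loop, the optimality conditions with $Q=R=I$ give the Lyapunov form $P = \dlyap(A_K,\, I + K^\top K)$ together with the explicit gain $K = -(I + B^\top P B)^{-1} B^\top P A$, and likewise for the hatted quantities. The one structural fact I would lean on throughout is that, because $I + K^\top K \succeq I$, the series $P = \sum_{i \geq 0} (A_K^\top)^i (I + K^\top K) A_K^i$ dominates the Gramian $\Gamma := \sum_{i\geq 0}(A_K^\top)^i A_K^i$, so $\norm{\Gamma} \leq \norm{P}$ and the Lyapunov solution operator $\mathcal{T}(N) := \sum_{i\geq 0}(A_K^\top)^i N A_K^i$ obeys $\norm{\mathcal{T}(N)} \leq \norm{P}\,\norm{N}$ for symmetric $N$. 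This is what converts ``one unit of perturbation'' into ``one factor of $\norm{P}$.'' I would also record the Riccati identities that let the final bound depend on $\norm{P}$ alone: $P \succeq I$ gives $\norm{P^{-1/2}} \leq 1$; $A_K^\top P A_K \preceq P$ gives $\norm{A_K} \leq \norm{P}^{1/2}$; $P \succeq K^\top K$ gives $\norm{K} \leq \norm{P}^{1/2}$; and $\norm{(I + B^\top P B)^{-1/2} B^\top P^{1/2}} \leq 1$ together with $\norm{B(I + B^\top P B)^{-1} B^\top} \leq \norm{P^{-1}} \leq 1$.

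First I would establish a Riccati perturbation bound of the form $\norm{\hat P - P} \leq C\,\norm{P}^{3}\epsilon$ by a value-comparison plus self-bounding argument. Apply the suboptimal gain $K$ to the perturbed system to get $\tilde P := \dlyap(\hat A + \hat B K,\, I + K^\top K)$; optimality of $\hat K$ for $(\hat A,\hat B)$ yields $\hat P \preceq \tilde P$, and applying $\hat K$ to $(A,B)$ sandwiches $\hat P$ from the other side, so it suffices to control $\tilde P - P$. The two closed loops differ by $\Delta := (\hat A - A) + (\hat B - B)K$ with $\norm{\Delta} \leq \epsilon(1 + \norm{K}) \leq \epsilon(1 + \norm{P}^{1/2})$. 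Subtracting the Lyapunov equations for $P$ and $\tilde P$ and solving for the difference gives $\tilde P - P = \mathcal{T}\big(A_K^\top \tilde P \Delta + \Delta^\top \tilde P A_K + \Delta^\top \tilde P \Delta\big)$; taking norms and using $\norm{\mathcal{T}(\cdot)} \leq \norm{P}\,\norm{\cdot}$, with $\norm{A_K}\leq\norm{P}^{1/2}$, produces a self-bounding inequality in $\norm{\tilde P - P}$, which together with $\norm{\tilde P} \leq \norm{P} + \norm{\tilde P - P}$ solves to the claimed $\norm{P}^{3}\epsilon$ bound up to a $(1-\alpha)^{-1}$ factor. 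The hypothesis $\alpha = 8\norm{P}^2\epsilon < 1$ is exactly what guarantees the perturbed DARE still has a stabilizing solution (so that $\hat P$, $\hat K$ are defined) and what keeps the self-bounding inequality solvable.

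Next I would propagate this to the gain. Differencing $K = -(I + B^\top P B)^{-1} B^\top P A$ against its hatted version produces terms in $\hat P - P$, in $\hat A - A$ and $\hat B - B$, and in the perturbed inverse. Bounding these naively reintroduces $\norm{A}$ and $\norm{B}$; the crux is that the same Riccati structure yielding $\norm{K}\leq\norm{P}^{1/2}$ (read off directly from $P \succeq K^\top K$, \emph{not} from factoring the gain formula) forces cancellations that leave only powers of $\norm{P}$. Left-multiplying by $B$ and carrying out these cancellations with the regularized identities above, then combining with the $\norm{P}^{3}\epsilon$ Riccati estimate and the compounding $(1-\alpha)^{-1}$ factors, is what yields the stated $\norm{B(\hat K - K)} \leq 8(1-\alpha)^{-7/4}\norm{P}^{7/2}\epsilon$; note $3 + \tfrac12 = \tfrac72$ matches the accounting of one extra half-power from the gain step.

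The main obstacle is the Riccati step: making the self-bounding perturbation rigorous requires simultaneously proving that the perturbed closed loop stays Schur-stable and controlling $\norm{\hat P}$ on the right-hand side, which is precisely where the smallness condition $\alpha<1$ is consumed. The secondary difficulty is the gain step, where the $A$- and $B$-dependence is not absorbed by naive submultiplicativity but only by the specific cancellations of the Riccati structure; reproducing these carefully enough to land on the exponents $7/2$ and $7/4$, rather than a cruder polynomial in $\norm{P}$, is the delicate bookkeeping most specific to \cite{simchowitz2020naive}.
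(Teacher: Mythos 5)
The paper offers no proof of this lemma: it is imported verbatim as Proposition~6 of \cite{simchowitz2020naive}, so your decision to treat the citation as the formal justification is exactly what the paper does, and in that sense your proposal matches. Your accompanying sketch, however, follows a genuinely different route from the cited source's internal proof: you propose the classical comparison argument --- sandwich $\hat P$ between Lyapunov solutions obtained by playing the suboptimal gains $K$ and $\hat K$ on the opposite systems, then control the gap via the Lyapunov difference identity and a self-bounding inequality --- whereas \cite{simchowitz2020naive} derives these bounds by its ``self-bounding ODE method,'' differentiating the DARE solution along the interpolation path between $\theta$ and $\hat\theta$ and integrating the resulting differential inequality. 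Your route is sound at the level of scaling: the structural facts you invoke ($P \succeq I$, $\norm{A+BK}\leq \norm{P}^{1/2}$, $\norm{K}\leq\norm{P}^{1/2}$, the bound $\norm{\mathcal{T}(N)}\leq\norm{P}\,\norm{N}$ on the Lyapunov solution operator, and the difference identity for $\tilde P - P$) are all correct, and they would deliver $\norm{\hat P - P} = \bigO(\norm{P}^{3}\epsilon)$ and $\norm{B(\hat K - K)} = \bigO(\norm{P}^{7/2}\epsilon)$ under the smallness condition $\alpha<1$. What it would not automatically deliver is the lemma \emph{as stated}: the explicit factor $8$ and the exponent $7/4$ on $(1-\alpha)^{-1}$ are artifacts of the particular bookkeeping in \cite{simchowitz2020naive}, and a comparison-style proof would generically land on different (though qualitatively equivalent) constants. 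Since the downstream use of this lemma in the paper (\Cref{lem:HH_bound} and the proof of \Cref{thrm:iss}) only needs the $\bigO(\norm{P}^{7/2}\epsilon)$ scaling, this discrepancy is harmless, but to assert the stated constants one should, as both you and the paper do, defer to the citation.
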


\begin{lemma}
[{\protect{\cite[Theorem 8]{simchowitz2020naive}}}]
\label{lem:AHA_bound}
Let $\theta = [A \  B]$ be a stabilizable system, with $P := P(A,B)$, and $H = H(A, B)$. Let
$\hat{\theta} = [ \hat{A} \ \hat{B} ]$ be an estimate of $\theta$ satisfying
$\max\left\{\norm{A - \hat{A}}, \norm{B - \hat{B}} \right\}\leq \epsilon$. Consider certainty equivalent controller $\hat{K} =K^*(\hat{\theta})$.
Then if $\epsilon$ is such that $ 54 \norm{P}^5 \epsilon \leq 1$, we have
\begin{align*}
 (A+B\hat{K})^\top H (A+B\hat{K}) &\preceq \left( 1 - \frac{1}{2} \norm{H}^{-1}\right) H \preceq \left( 1 - \frac{1}{2}\norm{P}^{-1}\right) H. 
\end{align*}
\end{lemma}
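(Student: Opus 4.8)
\textit{Proof sketch (proposal).} The plan is to treat $\hat K$ as a perturbation of the true optimal gain $K := K^*(\theta)$ and to exploit the exact Lyapunov identity satisfied by $H$. Writing $M := A + BK$ for the true closed-loop matrix, the series representation \eqref{eq:lyapuv} (with $I+K^\top K$ replaced by $I$) gives $H = \sum_{i\geq 0}(M^\top)^i M^i$ and hence the identity $M^\top H M = H - I$. Setting $E := B(\hat K - K)$, so that $A + B\hat K = M + E$, I would expand
\begin{equation*}
(A+B\hat K)^\top H (A+B\hat K) = M^\top H M + \Xi = H - I + \Xi,
\end{equation*}
where $\Xi := M^\top H E + E^\top H M + E^\top H E$ is symmetric.

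First I would reduce the target matrix inequality to a scalar bound on $\Xi$. After substituting $M^\top H M = H-I$, the desired estimate $(A+B\hat K)^\top H (A+B\hat K)\preceq (1-\tfrac12\norm{H}^{-1})H$ is equivalent to $\Xi \preceq I - \tfrac12\norm{H}^{-1}H$; and since $H\preceq \norm{H}\,I$ we have $\tfrac12\norm{H}^{-1}H\preceq \tfrac12 I$, so the right-hand side dominates $\tfrac12 I$. Because $\Xi$ is symmetric, it therefore suffices to prove $\norm{\Xi}\leq \tfrac12$, which I would obtain from the triangle-inequality bound $\norm{\Xi}\leq 2\norm{M}\,\norm{H}\,\norm{E} + \norm{H}\,\norm{E}^2$.

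Next I would collect the ingredient norm bounds. From $H\succeq M^\top M$ we get $\norm{M}\leq\norm{H}^{1/2}$, and from $I+K^\top K\succeq I$ we get $P\succeq H$ and hence $\norm{H}\leq\norm{P}$, so that $\norm{M}\leq\norm{P}^{1/2}$. The crucial perturbation estimate is \Cref{lem:BK_bound}, which bounds precisely the quantity $\norm{E}=\norm{B(\hat K - K)}$ appearing in $\Xi$ (rather than $\norm{\hat K - K}$): with $\alpha := 8\norm{P}^2\epsilon$ it gives $\norm{E}\leq 8(1-\alpha)^{-7/4}\norm{P}^{7/2}\epsilon$. Under the hypothesis $54\norm{P}^5\epsilon\leq 1$ together with $\norm{P}\geq 1$, we have $\alpha\leq \tfrac{8}{54}<1$, so $(1-\alpha)^{-7/4}$ is an absolute constant $C_0$ and $\norm{E}\leq 8C_0\norm{P}^{7/2}\epsilon$.

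Finally, substituting these into the bound for $\norm{\Xi}$ yields a dominant term $2\norm{M}\,\norm{H}\,\norm{E}\leq 16 C_0\norm{P}^{5}\epsilon$ and a lower-order quadratic term, and the choice $\epsilon\leq 1/(54\norm{P}^5)$ cancels every power of $\norm{P}$ and leaves a pure constant. The hard part — and the reason the hypothesis carries the specific constant $54$ — is exactly this final bookkeeping: one must verify that the accumulated absolute constants sum to at most $\tfrac12$, so that $\norm{\Xi}\leq\tfrac12$ indeed holds. Granting this, the first inequality follows, and the second inequality $(1-\tfrac12\norm{H}^{-1})H\preceq (1-\tfrac12\norm{P}^{-1})H$ is immediate from $\norm{H}\leq\norm{P}$ and $H\succeq 0$.
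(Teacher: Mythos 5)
Your proposal is correct, but there is nothing in the paper to compare it against: the paper never proves \Cref{lem:AHA_bound} — it imports the statement verbatim from \cite[Theorem 8]{simchowitz2020naive}, exactly as it imports \Cref{lem:BK_bound} from \cite[Proposition 6]{simchowitz2020naive}. What your argument actually accomplishes is to show that the second imported result follows from the first by elementary Lyapunov manipulations, which reduces the paper's external dependencies to a single proposition. The chain is sound: the series form of $H$ (i.e.\ \eqref{eq:lyapuv} with $I+K^\top K$ replaced by $I$) gives both the exact identity $M^\top H M = H - I$ for $M := A+BK$ and the comparisons $M^\top M \preceq H \preceq P$, hence $\norm{M}\leq \norm{H}^{1/2}\leq\norm{P}^{1/2}$ and $\norm{P}\geq\norm{H}\geq 1$; the reduction of the matrix inequality to the scalar bound $\norm{\Xi}\leq \tfrac12$ is valid because $\Xi$ is symmetric and $I - \tfrac12\norm{H}^{-1}H \succeq \tfrac12 I$; and \Cref{lem:BK_bound} is applicable since $\alpha = 8\norm{P}^2\epsilon \leq 8/(54\norm{P}^3)\leq 8/54 < 1$. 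The one step you defer, the final bookkeeping, does in fact close with room to spare: $C_0 = (1-8/54)^{-7/4} < 1.33$, so the cross term is at most $16\,C_0\norm{P}^{5}\epsilon \leq 16\,C_0/54 < 0.40$, the quadratic term is at most $64\,C_0^2\norm{P}^{8}\epsilon^2 \leq 64\,C_0^2/54^2 < 0.04$, and thus $\norm{\Xi} < 0.44 < \tfrac12$; the second inequality of the lemma then follows from $\norm{H}\leq\norm{P}$ as you say. So, once that half page of arithmetic is written out, your sketch is a complete, self-contained proof of a result the paper only cites.
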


\begin{lemma}[\cite{gahinet1990computable}]
\label{lemma:sensitivity}
Let $X$ be the solution to the Lyapunov equation $X-F^{\top} X F=M$, and let $X+\Delta X$ be the solution to the perturbed problem
$$
Z-(F+\Delta F)^{\top} Z (F+\Delta F)=M.
$$
The following inequality holds for the spectral norm:
$$
\frac{\|\Delta X\|}{\|X+\Delta X\|} \leq 2 \left\|\sum_{k=0}^{+\infty}\left(F^{\top}\right)^{k} F^{k}\right\|\cdot (2\|F\|+\|\Delta F\|) \cdot {\|\Delta F\|}.
$$
\end{lemma}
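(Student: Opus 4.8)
The plan is to derive, from the two Lyapunov equations, a single Stein equation governing the error $\Delta X$ whose operator is built from the \emph{unperturbed} $F$, and then to bound its solution via a positive-semidefinite (PSD) sandwich. Write $\hat F := F + \Delta F$ and $Z := X + \Delta X$, so $X - F^\top X F = M$ and $Z - \hat F^\top Z \hat F = M$. Throughout we use that both solutions are well-defined, which in particular forces $\rho(F) < 1$ (and $\rho(\hat F)<1$); this is exactly what makes the series $S := \sum_{k=0}^\infty (F^\top)^k F^k$ appearing in the statement convergent.

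First I would subtract the two equations and substitute $X = Z - \Delta X$ to express the right-hand side through $Z$ rather than $X$. Since $Z - X = \hat F^\top Z \hat F - F^\top X F$, replacing $X$ by $Z - \Delta X$ and rearranging yields the Stein equation
\[
\Delta X - F^\top \Delta X F = N, \qquad N := \hat F^\top Z \hat F - F^\top Z F .
\]
The left-hand operator involves $F$, not $\hat F$, so (using $\rho(F)<1$) the unique solution is the Neumann-type series $\Delta X = \sum_{k=0}^\infty (F^\top)^k N F^k$. Expanding $N = (F+\Delta F)^\top Z (F+\Delta F) - F^\top Z F = F^\top Z\,\Delta F + \Delta F^\top Z F + \Delta F^\top Z\,\Delta F$ and applying the triangle inequality and submultiplicativity gives $\|N\| \le \|Z\|\,\|\Delta F\|\,(2\|F\| + \|\Delta F\|)$.

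The main step is to convert the series for $\Delta X$ into the operator-norm factor $\|S\|$ in the claim; a naive term-by-term estimate $\sum_k \|(F^\top)^k\|\,\|N\|\,\|F^k\|$ does not produce the clean quantity $\|\sum_k (F^\top)^k F^k\|$. The key observation is that $N$ is symmetric: since $M$, and hence $Z$, is symmetric, the cross terms $F^\top Z\,\Delta F$ and $\Delta F^\top Z F$ are mutual transposes, so $-\|N\|\,I \preceq N \preceq \|N\|\,I$. Each congruence map $A \mapsto (F^k)^\top A F^k$ preserves the PSD order, so summing over $k$ gives $-\|N\|\,S \preceq \Delta X \preceq \|N\|\,S$. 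Because $S \succeq 0$, this two-sided relation yields $\|\Delta X\| \le \|N\|\,\|S\|$ for the symmetric matrix $\Delta X$.

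Combining the two bounds gives $\|\Delta X\| \le \|Z\|\,\|\Delta F\|\,(2\|F\|+\|\Delta F\|)\,\|S\|$, and dividing by $\|Z\| = \|X+\Delta X\|$ produces exactly the stated inequality — in fact without the leading factor $2$, so the claimed bound follows a fortiori. The main obstacle is precisely the PSD-sandwich step: it is what replaces a loose congruence-wise estimate with the exact series norm $\|S\|$, and it rests on verifying the symmetry of $N$ together with the order-preservation of the congruences $A \mapsto (F^k)^\top A F^k$.
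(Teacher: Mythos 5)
Your proof is correct, and in fact it cannot be compared against an in-paper argument: the paper imports \Cref{lemma:sensitivity} verbatim from \cite{gahinet1990computable} and gives no proof of it, so your derivation is a self-contained substitute. The chain checks out: subtracting the two equations and substituting $X = Z - \Delta X$ does give the Stein equation $\Delta X - F^\top \Delta X\, F = N$ with $N = \hat F^\top Z \hat F - F^\top Z F = F^\top Z\,\Delta F + \Delta F^\top Z F + \Delta F^\top Z\,\Delta F$, whence $\|N\| \leq \|Z\|\,\|\Delta F\|\,(2\|F\| + \|\Delta F\|)$; the Neumann series $\Delta X = \sum_{k\geq 0}(F^\top)^k N F^k$ is the unique solution once $\rho(F)<1$; and the PSD sandwich $-\|N\| S \preceq \Delta X \preceq \|N\| S$ (valid because congruences $A \mapsto (F^k)^\top A F^k$ preserve the Loewner order, and $\lambda_{\max}$ is monotone in that order with $\lambda_{\max}(S) = \|S\|$ since $S \succeq I$) correctly yields $\|\Delta X\| \leq \|N\|\,\|S\|$ for the symmetric $\Delta X$. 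Your observation that this route drops the leading factor $2$ in the cited bound is also right, so the lemma follows a fortiori; the constant in \cite{gahinet1990computable} is simply not tight by this argument.

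Two hypotheses you use deserve to be made explicit rather than derived. First, you assume $M = M^\top$ (needed for $Z$, hence $N$, to be symmetric); the lemma statement does not say this, though it is standard for Lyapunov equations and is satisfied in the paper's only application (\Cref{lem:HH_bound} uses $M = I_n$). If you want to remove it, the sandwich step can be replaced by a Cauchy--Schwarz estimate that needs no symmetry: for unit $x,y$, $|y^\top \Delta X\, x| \leq \|N\| \sum_k \|F^k y\|\,\|F^k x\| \leq \|N\|\,(y^\top S y)^{1/2}(x^\top S x)^{1/2} \leq \|N\|\,\|S\|$. Second, your remark that well-posedness ``forces'' $\rho(F)<1$ is not quite right as logic --- the Stein equation is uniquely solvable whenever no two eigenvalues of $F$ have product $1$, which does not imply $\rho(F)<1$; rather, $\rho(F)<1$ is a standing hypothesis of the cited result (it is what makes $S$ finite), and you should state it as such. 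Neither point affects the validity of the proof in the regime where the lemma is invoked.
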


\begin{lemma}
\label{lem:HH_bound}
Suppose $\epsilon_{t+1}:=\max\left\{ \norm{ \widehat{A}_{t+1} - \widehat{A}_t } , \norm{ \widehat{B}_{t+1} - \widehat{B}_t } \right\}$ and $\alpha :=  8 \norm{P_*}^2 \epsilon_{t+1} \leq 1/2$,  . Then $H_t$ defined in \eqref{eq:PandH} satisfies 
$$H_t \preceq H_{t+1}(1 + \eta_{t+1})$$ 
for $\eta_{t+1} :=  c_* \beta_* \epsilon_{t+1}$, and
\[ \beta_*:= \frac{2C_*}{1-r_*^2} \left( 2\gamma_* + 3 + \norm{K_*} \right)  \left( 1 + 32 \norm{P_*}^2 + \norm{K_*} \right).\]
\end{lemma}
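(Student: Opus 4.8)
The plan is to treat $H_t = H(\widehat A_t, \widehat B_t)$ and $H_{t+1} = H(\widehat A_{t+1}, \widehat B_{t+1})$ as solutions of the \emph{same} discrete Lyapunov equation $Z - F^\top Z F = I$ driven by the two closed-loop matrices $F_t := \widehat A_t + \widehat B_t \widehat K_t$ and $F_{t+1} := \widehat A_{t+1} + \widehat B_{t+1}\widehat K_{t+1}$ (with $\widehat K_t = K^*(\widehat\theta_t)$, $\widehat K_{t+1} = K^*(\widehat\theta_{t+1})$), and to regard $H_{t+1}$ as a perturbation of $H_t$. The first step is a reduction: the desired Loewner bound $H_t \preceq (1+\eta_{t+1})H_{t+1}$ follows from a scalar bound on $\Delta X := H_{t+1} - H_t$, because $\Delta X$ is symmetric and hence $H_t = H_{t+1} - \Delta X \preceq H_{t+1} + \norm{\Delta X}\, I \preceq \big(1 + \norm{\Delta X}/\lambda_{\min}(H_{t+1})\big)H_{t+1}$. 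So it suffices to prove $\norm{\Delta X}/\lambda_{\min}(H_{t+1}) \leq \eta_{t+1}$.

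The core estimate of $\norm{\Delta X}$ comes from the Lyapunov sensitivity inequality \Cref{lemma:sensitivity}, which I would invoke with unperturbed matrix $F = F_t$ (solution $H_t$) and perturbed matrix $F+\Delta F = F_{t+1}$ (solution $H_{t+1}$), so that $\Delta F = F_{t+1}-F_t$ and $X+\Delta X = H_{t+1}$. This direction of perturbation is essential: it puts $\norm{H_{t+1}}$ in the denominator of the sensitivity bound, and after rearranging gives
\begin{equation*}
\frac{\norm{\Delta X}}{\lambda_{\min}(H_{t+1})} = \frac{\lambda_{\max}(H_{t+1})}{\lambda_{\min}(H_{t+1})}\cdot\frac{\norm{\Delta X}}{\norm{H_{t+1}}} \leq c_* \cdot 2\norm{H_t}\big(2\norm{F_t} + \norm{\Delta F}\big)\norm{\Delta F},
\end{equation*}
where I used $\norm{\sum_{k} (F_t^\top)^k F_t^k} = \norm{H_t}$ and the fact that $\lambda_{\max}(H_{t+1})/\lambda_{\min}(H_{t+1}) \leq c_*$ since $\widehat\theta_{t+1}\in\Theta$.

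It then remains to collapse the three remaining factors into $\beta_*$. The factor $2\norm{H_t}\leq 2C_*/(1-r_*^2)$ follows from the series form $H_t = \sum_{k}(F_t^\top)^k F_t^k$ and the definition of $C_*,r_*$, and $\norm{F_t}\leq\gamma_*$ by definition of $\gamma_*$. The crux is bounding $\norm{\Delta F} = \norm{F_{t+1}-F_t}$. I would split
\begin{equation*}
F_{t+1}-F_t = (\widehat A_{t+1}-\widehat A_t) + \widehat B_t(\widehat K_{t+1}-\widehat K_t) + (\widehat B_{t+1}-\widehat B_t)\widehat K_{t+1},
\end{equation*}
treat $\widehat\theta_t$ as the ``true'' model and $\widehat\theta_{t+1}$ as its estimate, and apply the gain-perturbation bound \Cref{lem:BK_bound} to the middle term; this is admissible because the hypothesis $\alpha = 8\norm{P_*}^2\epsilon_{t+1}\leq 1/2$ forces $8\norm{P_t}^2\epsilon_{t+1} < 1$ and lets the $(1-\alpha)^{-7/4}$ factor be absorbed into a constant. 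The outer two terms contribute $\epsilon_{t+1}$ and $\norm{K_*}\epsilon_{t+1}$. Collecting terms yields the proportional bound $\norm{\Delta F}\leq \epsilon_{t+1}(1 + 32\norm{P_*}^2 + \norm{K_*})$ used for the rightmost factor, and (using $\epsilon_{t+1}\leq 1/(16\norm{P_*}^2)$) a crude absolute bound $\norm{\Delta F}\leq 3+\norm{K_*}$ used for the middle factor; multiplying the three factors by $c_*$ reproduces $\eta_{t+1}=c_*\beta_*\epsilon_{t+1}$.

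The main obstacle is the passage from a spectral-norm bound to the positive-semidefinite ordering: \Cref{lemma:sensitivity} controls only $\norm{H_{t+1}-H_t}$, and one must invoke it in exactly the direction that exposes the \emph{perturbed} solution's condition number $c_*$, rather than an uncontrollable mismatched ratio of $\norm{H_t}$ against $\lambda_{\min}(H_{t+1})$. The second delicate point is certifying that the optimal LQR gains $\widehat K_t,\widehat K_{t+1}$ of two nearby models obey the smallness precondition of \Cref{lem:BK_bound}, so that $\norm{\widehat B_t(\widehat K_{t+1}-\widehat K_t)}$ is genuinely $\bigO(\epsilon_{t+1})$; the remaining bookkeeping to fold the $\norm{P_*}$- and $\norm{K_*}$-dependent constants into $\beta_*$ is routine.
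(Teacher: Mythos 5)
Your proof is correct and takes essentially the same route as the paper's: the same application of Lemma~\ref{lemma:sensitivity} with $F=\widehat A_t+\widehat B_t\widehat K_t$ and perturbed solution $H_{t+1}$, the same use of Lemma~\ref{lem:BK_bound} to make the gain-difference term $\bigO(\epsilon_{t+1})$, and the same condition-number argument via $c_*$ to convert the spectral-norm bound into the Loewner ordering. The only cosmetic difference is your splitting $\widehat B_t(\widehat K_{t+1}-\widehat K_t)+(\widehat B_{t+1}-\widehat B_t)\widehat K_{t+1}$ versus the paper's $\widehat B_{t+1}(\widehat K_{t+1}-\widehat K_t)+(\widehat B_{t+1}-\widehat B_t)\widehat K_t$, which merely swaps which of the two hypothesis models plays the ``true'' versus ``estimated'' system in Lemma~\ref{lem:BK_bound} and is immaterial since both lie in $\Theta$.
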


\textit{Proof of \Cref{lem:HH_bound}. } For notational brevity, we drop the time index for $\epsilon$ and $\eta$ in the proof.
Applying Lemma~\ref{lemma:sensitivity} with $X = H_{t}$, $X+\Delta X  = H_{t+1}$ and $F = \widehat{A}_t+\widehat{B}_t\widehat{K}_{t}$ and $\Delta F = (\widehat{A}_{t+1} - \widehat{A}_{t}) + (\widehat{B}_{t+1}\widehat{K}_{t+1} - \widehat{B}_{t}\widehat{K}_{t}) $, and $M=I_n$ we have 
\begin{align*}
&\frac{\norm{ H_{t+1} - H_t} }{ \norm{H_{t+1}}}  \leq 2 \left\|\sum_{k=0}^{+\infty}\left((\widehat{A}_t+\widehat{B}_t \widehat{K}_{t})^{\top}\right)^{k} (\widehat{A}_t+\widehat{B}_t \widehat{K}_{t})^{k}\right\|\\
&\quad\cdot \Big(2\norm{\widehat{A}_t+\widehat{B}_t \widehat{K}_{t}}+ \norm{ \widehat{A}_{t+1} - \widehat{A}_{t})} +\\
& \quad \quad \quad \quad \quad \norm{\widehat{B}_{t+1} (\widehat{K}_{t+1}-\widehat{K}_{t} ) } + \norm{(\widehat{B}_{t+1}- \widehat{B}_{t})\widehat{K}_{t} } \Big) \\
& \quad \cdot \Big( \norm{ \widehat{A}_{t+1} - \widehat{A}_{t})} + \norm{\widehat{B}_{t+1} (\widehat{K}_{t+1}-\widehat{K}_{t} ) } + \norm{(\widehat{B}_{t+1}- \widehat{B}_{t})\widehat{K}_{t} }  \Big)\\
& \quad   \quad  \leq \epsilon \frac{2C_*}{1-r_*^2} \left( 2\gamma_* +  
 \epsilon \left(  1 + 32 \norm{P_*}^2 + \norm{K_*} \right) \right)\cdot \\
 &\quad \quad \quad \quad \quad \quad \quad \quad\left(  1 + 32 \norm{P_*}^2 + \norm{K_*} \right)  \\
 & \quad \quad \quad \quad \quad  \leq \epsilon \frac{2C_*}{1-r_*^2} \left( 2\gamma_* + 3 + \norm{K_*} \right)  \left( 1 + 32 \norm{P_*}^2 + \norm{K_*} \right)\\
 & \quad \quad \quad \quad \quad =: \epsilon \beta,
\end{align*}
where in the second inequality we used Lemma~\ref{lem:BK_bound} to bound $\norm{\widehat{B}_{t+1}( \widehat{K}_{t+1} - \widehat{K}_t) } \leq 32 \norm{P_{t+1}}^{7/2} \epsilon$ and in the last inequality we use the assumption $8 \epsilon \norm{P_*}^2 \leq 1/2$.

To show $H_t \preceq H_{t+1}(1 + \eta)$ for some $\eta$, it suffices to show that for all vectors $v \in RR^n$, 
$v^\top (H_t - H_{t+1}) v \leq \eta v^\top H_{t+1} v$. 
 With the preceding calculation, we have 
\begin{align*}
     v^\top (H_t - H_{t+1})v &\leq \| v\|^2 
 \norm{H_t - H_{t+1}}\\
 &\leq \epsilon \beta_* \| v\|^2  \norm{H_{t+1}}\\
 & \leq \epsilon \beta_* c_* \lambda_{\min}(H_{t+1})\| v\|^2\\
 &\leq \epsilon \beta_* c_*  v^\top H_{t+1}v
\end{align*}
This proves the desired bound, with $\eta = c_* \beta_* \epsilon$ and$$\beta_* =  \frac{2C_*}{1-r_*^2} \left( 2\gamma_* + 3 + \norm{K_*} \right)  \left( 1 + 32 \norm{P_*}^2 + \norm{K_*} \right).$$
$\hfill \blacksquare$

\subsection{Proof of \Cref{thrm:iss}}
\label{appendix:main_proof}

Recall that the closed loop dynamics can be characterized as \eqref{eq:closed-loop}. Therefore, 
\begin{align}
\label{eq:closed-loop-norm}
    \norm{x_t} \leq  W + W\sum_{s = 0}^{t-2} \norm{ \prod_{\tau \in [t-1 : s+1] } \left( \widehat{A}_{\tau} + \widehat{B}_{\tau} \widehat{K}_{\tau-1} \right)  }.
\end{align}
Define 
\begin{align*}
    L_t &:= H_t^{-1/2} (\widehat{A}_t + \widehat{B}_t \widehat{K}_{t-1}) H_t^{1/2},
\end{align*}
where $H_t$ is defined in \eqref{eq:PandH}. This gives,
\begin{align*}
    \widehat{A}_t + \widehat{B}_t \widehat{K}_{t-1} &:= H_t^{1/2}  L_t  H_t^{-1/2}.
\end{align*}
Therefore, each summand in \eqref{eq:closed-loop-norm} can be bounded as
\begin{align}
    & \norm{ \prod_{\tau\in I_s} \left(\widehat A_\tau + \widehat B_\tau \widehat K_{\tau-1}\right) } \nonumber \\
    &\leq \underbrace{\norm{H_{t-1}^{1/2}}\norm{ H_{s+1}^{-1/2}}}_{(a)}
        \underbrace{\prod_{k\in I_{s+1}} \norm{H_{k}^{-1/2}  H_{k-1}^{1/2}}}_{(b)}   
         \underbrace{\prod_{\tau\in I_s} \norm{L_{\tau}}}_{(c)} \label{eq:summand-norm}
\end{align}
where we used $I_s$ as shorthand for the interval $[t-1:s+1]$.

\noindent \textbf{Bounding (a). }
We directly use the system-theoretical constant introduced in \Cref{appendix:auxillary} so that (a) $\leq h_*$.

\noindent \textbf{Bounding (b). }
Lemma~\ref{lem:HH_bound} directly implies that for all $t\in \mathbb{N}_+$, $H_{t-1} H_t^{-1} \succ (1+\eta_t)I$. Therefore, we have
\begin{align*}
 \norm{H_{t-1}^{1/2}H_{t}^{-1/2}} &\leq (1+\eta_t)^{1/2} \leq 1 + \eta_t/2 \leq e^{\eta_t/2}.
\end{align*}
Hence with the fact that $H_t$'s are symmetric,
\begin{align}
\label{eqn:bound_HH}
    \norm{H_{t}^{-1/2}H_{t-1}^{1/2}} &\leq
    \begin{cases}
        e^{\frac{c_*\beta_* \norm{\hat \theta_t - \hat \theta_{t-1}}_F}{2}}, &  \norm{\hat \theta_t - \hat \theta_{t-1}}_F\leq \epsilon_* \\
        h_* & \mbox{otherwise}.
    \end{cases}
\end{align}

\noindent \textbf{Bounding (c). }
Lemma~\ref{lem:AHA_bound} implies that if $\norm{\hat \theta_t - \hat \theta_{t-1}}_F \leq \epsilon_*$  then
\[  \left( \widehat{A}_t + \widehat{B}_t \widehat{K}_{t-1} \right)^\top H_t \left( \widehat{A}_t + \widehat{B}_t \widehat{K}_{t-1} \right) \preceq \left(  1 - \frac{1}{2} \norm{P_t}^{-1}  \right) H_t. \]
This in turn implies that
\begin{align*}
    L_t^\top L_t &= H_t^{-1/2} (\widehat{A}_t + \widehat{B}_t \widehat{K}_{t-1})^\top H_t (\widehat{A}_t + \widehat{B}_t \widehat{K}_{t-1}) H_t^{-1/2} \\
    & \preceq H_t^{-1/2} \left(  1 - \frac{1}{2} \norm{P_t}^{-1}  \right) H_t H_t^{-1/2} \\
    & \preceq \left(  1 - \frac{1}{2} \norm{P_t}^{-1}  \right) I_n. 
\end{align*}
This in turn implies that 
$\norm{L_t} \leq \left( 1 - \frac{1}{2\norm{P_*}}  \right)^{1/2}$.
To summarize, 
\begin{align}
\label{eqn:bound_L}
    \norm{L_t} & \leq 
    \begin{cases}
    \rho_L := \left( 1 - \frac{1}{2\norm{P_*}}  \right)^{1/2} <1, &  \norm{\hat \theta_t - \hat \theta_{t-1}}_F\leq \epsilon_*\\
    \ell_* & \mbox{otherwise},
    \end{cases}
\end{align}
for some constant $\ell_*$ such that for all $t\in \mathbb{N}_+$, 
$$\norm{H_t^{1/2} (\widehat{A}_t + \widehat{B}_t \widehat{K}_{t-1}) H_t^{-1/2}} \leq \ell_*$$

\noindent \textbf{Combining (a,b,c). }
We now plug in the bounds \eqref{eqn:bound_HH} and \eqref{eqn:bound_L} into  \eqref{eq:summand-norm}. Let $\hat{\Delta}_{[s,e]} := \sum_{\tau = s+1}^e \norm{\hat \theta_{\tau} - \hat \theta_{\tau -1}}_F$ be the partial-path movement of the selected hypothesis models and ${\Delta}_{[s,e]} := \sum_{\tau = s+1}^e \norm{ \theta_{\tau} -  \theta_{\tau -1}}_F$ be the true model  partial-path variation. 
We also denote by $n_{s,t}$ the number of pairs $(\tau,\tau-1)$ with $s+1 \leq \tau \leq t-1$ where $\norm{\hat \theta_\tau - \hat \theta_{\tau-1}}_F> \epsilon_*$. Note that $n_{s,t} \leq \widehat \Delta_{[s,t-1]} / \epsilon_*$. Therefore,
\begin{align*}
    &  \norm{\prod_{\tau\in [t-1:s+1]} \left(\widehat A_\tau + \widehat B_\tau \widehat K_{\tau-1}\right) }\\
    & \leq h_* \cdot h_*^{n_{s,t}} \cdot e^{\frac{c_*\beta_* \hat{\Delta}_{[s+1,t-1]} }{2}} \cdot \ell_*^{n_{s,t}} \cdot \rho_L^{t-s-1-n_{s,t}}
    \\
    &\leq h_* \left( \frac{\ell_* h_*}{\rho_L} \right)^{\frac{\hat{\Delta}_{[s,t-1]}}{\epsilon_*}}  e^{\frac{c_*\beta_* \hat{\Delta}_{[s+1,t-1]} }{2}}\cdot  \rho_L^{t-s-1}\\
    &\leq h_* \left( \frac{\ell_* h_*}{\rho_L} \right)^{\frac{ 
 \bar{n} \left( \mathsf{dia}(\Theta)+ 2\kappa + {\Delta}_{[s,t-1]} \right) }{\epsilon_*}} \cdot e^{ \frac{c_* \beta_* \bar{n} \left( \mathsf{dia}(\Theta)+ 2\kappa + {\Delta}_{[s+1,t-1]} \right)}{2} } \cdot  \rho_L^{t-s-1}\\
    & =: c_0 \cdot c_1^{\Delta_{[s,t-1]}} \rho_L^{t-s-1}.
\end{align*}
where $\bar{n} := n(n + m)$ is the dimension of the parameter space for $[A_t \ B_t]$. 
Finally plugging the above in \eqref{eq:closed-loop-norm} gives
\begin{align*}
    \norm{x_{t}} &\leq W \left(1 + c_0 \sum_{s=0}^{t-2} c_1^{\Delta_{[s,t-1]}} \rho_L^{t-s-1}\right).
\end{align*}
 \hfill $\blacksquare$

\end{document}